\documentclass[10pt]{amsart}
 \usepackage[utf8]{inputenc} 
\usepackage{csquotes}

\newtheorem{thm}{Theorem}[section]
\newtheorem{cor}[thm]{Corollary}
\newtheorem{lem}[thm]{Lemma}
\newtheorem{prop}[thm]{Proposition}

\theoremstyle{definition}

\theoremstyle{remark}
\newtheorem{rem}[thm]{Remark}

\numberwithin{equation}{section}
\newtheorem{quest}[thm]{Question}

\newcommand{\rank}{\operatorname{\rm rank}}

\begin{document}

\title[Combinatorial Formula for Operator Sequences]{A Combinatorial Formula for Recursive \\ Operator Sequences and Applications}

\author[R.E. Curto]{Ra\'ul E. Curto}
\address{Department of Mathematics, The University of Iowa, Iowa City, Iowa, U.S.A.}
\email{raul-curto@uiowa.edu}

\author[A. Ech-charyfy]{Abderrazzak Ech-charyfy}
\address{Laboratory of Mathematical Analysis and Applications,\\
Faculty of Sciences, Mohammed V University in Rabat,\\
Rabat, Morocco}
\email{abderrazzak.echcharyfy@gmail.com and abderrazzak\_echcharyfy@um5.ac.ma}

\author[K. Idrissi]{Kaissar Idrissi}
\address{Laboratory of Mathematical Analysis and Applications,\\
Faculty of Sciences, Mohammed V University in Rabat,\\
Rabat, Morocco}
\email{kaissar.idrissi@fsr.um5.ac.ma}

\author[E.H. Zerouali]{El Hassan Zerouali}
\address{Laboratory of Mathematical Analysis and Applications,\\
Faculty of Sciences, Mohammed V University in Rabat,\\
Rabat, Morocco}
\email{elhassan.zerouali@fsr.um5.ac.ma}


\begin{abstract} \ We study sequences of bounded operators \((T_n)_{n \ge 0}\) on a complex separable Hilbert space \(\mathcal{H}\) that satisfy a linear recurrence relation of the form
$$
T_{n+r} = A_0 T_n + A_1 T_{n+1} + \cdots + A_{r-1} T_{n+r-1} \quad(\textrm{for all } n\ge 0),
$$
where the coefficients \(A_0, A_1, \dots, A_{r-1}\) are pairwise commuting bounded operators on \(\mathcal{H}\). \ 
Such relations naturally arise in the context of the operator-valued moment problem, particularly in the study of flat extensions of block Hankel operators. \ Our first goal is to derive an explicit combinatorial formula for   \(T_n\).
 As a concrete application, we provide an explicit expression for the powers of an operator-valued companion matrix. \ In the special case of scalar coefficients $A_k=a_kI_\mathcal{H}$, with $a_k\in\mathbb{R}$, we recover a Binet-type formula that allows the explicit computation of the powers and the exponential of algebraic operators in terms of Bell polynomials.
\end{abstract} 

\vspace{5pt}

		%
%
\maketitle
%


\section{Introduction}\label{sec:introduction}

Let $s^{(r)} = \{s_k\}_{k=0}^r$ be a finite sequence of real numbers. \ Tchakaloff's Theorem~\cite{Tchakaloff} states that if there exists a positive Borel measure $\mu$ on $\mathbb{R}$ such that
\(
s_k = \displaystyle\int_{\mathbb{R}} x^k\, d\mu(x),  
\)
then there exists a positive finitely atomic measure 
\(
\nu = \displaystyle\sum_{i=1}^m w_i\, \delta_{x_i},
\)
such that
\begin{equation}\label{recr}
s_k = \displaystyle \int_{\mathbb{R}} x^k\, d\nu(x)=\sum_{i=1}^m w_i x_i^k, \text{ for } k = 0, 1, \dots, r.    
\end{equation}
The expression in Equation \eqref{recr} allows us to extend our sequence in a moment sequence satisfying a recursive relation given by \begin{equation}\label{recur}
    s_{m+k} = \sum_{j=0}^{m-1} a_j s_{k+j}, \text{ for all } k \geq 0.
\end{equation}
associated with  the monic polynomial
$ P(X) = \displaystyle\prod_{i=1}^m (X - x_i) = X^m - \sum_{j=0}^{m-1} a_j X^j.
$

Sequences $\{s_k\}_{k \ge 0}$ satisfying a linear recurrence relation \eqref{recur} 
have been widely studied in different mathematical branches because of the large range of applications. \ They are known as  \emph{generalized Fibonacci sequences} in combinatorics and discrete mathematics, as \emph{linear difference equations} in numerical analysis, and as \emph{moment recursive sequences} in the context of the scalar moment problem. \ Hence, Tchakaloff's Theorem can be reformulated as follows: a finite sequence \(s^{(r)}\) is the moment sequence of some nonnegative measure if and only if it can be extended to a full moment recursive sequence. \  
As a consequence, the associated Hankel matrices satisfy the  \emph{flat extension property}; namely,
$$
\rank (s_{i+j+k})_{0 \le i,j \le r} 
= \rank (s_{i+j+k+1})_{0 \le i,j \le r+1}, \ \text{for any } k \ge 0. 
$$
This establishes the equivalence of three key properties~\cite{curto1991recursiveness, CF96, CF98}:
\begin{enumerate}
  \item The existence of a finitely atomic representing measure;  
  \item the recursiveness of the extended moment sequence; and 
  \item the flat extension property.
\end{enumerate}

In the \emph{matrix-valued case}, where the scalars in \eqref{recur} are replaced by Hermitian matrices, this equivalence has also been investigated, and the previous observations have been recovered. \ In particular, the authors in~\cite{Kimsey2014, KT} extended Tchakaloff's Theorem and the flatness property to truncated sequences of Hermitian matrices. \  
The connection between the scalar and matrix cases has been further explored in~\cite{cehz}, where it is shown that the union of the supports of the scalar representing measures reflects the support of the matrix-valued measure.

In the \emph{operator-valued setting}, scalar moments are replaced by self-adjoint operators acting on an infinite-dimensional Hilbert space \(\mathcal{H}\). \ It has been shown (see~\cite{curto2025local, Kimsey2014}) that Tchakaloff's Theorem does not extend to this context. \ More precisely, there exist finite families of self-adjoint operators that arise as moments of some nonnegative operator-valued measure, yet for which no finitely atomic operator-valued representing measure exists. \ Moreover, a truncated sequence of operator moments may fail to be recursively extendable in the operator moment problem. \  
This naturally raises the following question in the operator moment problem framework:
\begin{quest}
Are there suitable formulations of \emph{recursiveness} and \emph{flatness} under which the equivalence 
\((1) \iff (2) \iff (3)\) continues to hold, as in the finite-dimensional case?
\end{quest}  
In recent work \cite{flat}, the authors have extended the classical notion of \emph{flatness} from the finite-dimensional to the infinite-dimensional operator setting. \  
Specifically, consider a block operator
\[
X = 
\begin{pmatrix}
A & B \\
B^* & C
\end{pmatrix}
\]
acting on the Hilbert space \(\mathcal{H} \oplus \mathcal{K}\). \  
We say that \(X\) is a \emph{flat extension} of \(A\) if and only if
\[\mathcal{H} \oplus \mathcal{K}
= (\mathcal{H}, 0_{\mathcal{K}}) + \ker X.
\]
This definition preserves the essential geometric structure of flatness and naturally generalizes the finite-dimensional concept (see~\cite[Proposition 2.1]{flat in algebra}).  
It provides a robust framework for extending moment-theoretic and algebraic ideas to operator-valued contexts, where positivity and recursive properties coexist in infinite dimensions. As an application, we consider a sequence \((T_n)_{n\ge0}\) of bounded self-adjoint operators on a Hilbert space \(\mathcal{H}\).  
The block Hankel operator
\(
H_n = (T_{i+j})_{0 \le i,j \le n}
\)
acting on the Hilbert space
\(
\mathcal{H}^{(n)} = \underbrace{\mathcal{H} \oplus \dots \oplus \mathcal{H}}_{n+1 \text{ times}}
\)
admits a \emph{flat extension}
\(
H_{n+1} = (T_{i+j})_{0 \le i,j \le n+1}
\)
if and only if
\(
\mathcal{H}^{(n+1)} = (\mathcal{H}^{(n)}, 0_{\mathcal{H}}) + \ker H_{n+1}.
\)
Under this formulation, the flatness of the Hankel operator encodes the recursive structure of the sequence \((T_n)_{n\ge0}\). \  
Indeed, one can show that a truncated sequence admitting a flat extension determines a sequence of bounded self-adjoint operators satisfying a linear operator recurrence relation of order \(r\):
\begin{equation}\label{recurrence}
T_{n+r} = A_0 T_n + A_1 T_{n+1} + \cdots + A_{r-1} T_{n+r-1}, \quad \forall n \ge 0,
\end{equation}
where the coefficients
\(
\mathbf{A} = (A_0, A_1, \dots, A_{r-1})
\)
are bounded self-adjoint operators on \(\mathcal{H}\) with \(A_{r-1} \neq 0_{\mathcal{H}}\).

Our objective in this work is to study and classify the types of sequences that satisfy 
a linear recurrence relation of the form~\eqref{recurrence} in the case where the 
$r$-tuple $\mathbf{A} = (A_0, A_1, \dots, A_{r-1})$ consists of pairwise commuting self-adjoint operators.

\bigskip
\subsection{Our approach}
Our methodology can be outlined as follows:
\begin{itemize}
    \item We extend the recurrence relation~\eqref{recurrence} to a more general vectorial framework. 
    \item We rely on the spectral theory of commuting $r$-tuples of self-adjoint operators.
By means of the joint spectral theorem, the vectorial framework is reduced to a family of scalar recurrence problems on a measurable space. 
    \item For the scalar setting, we recover classical results, which are then lifted back to the operator framework 
    by means of continuous functional calculus. \ 
    This spectral approach enables us to construct explicit formulas for operator sequences satisfying~\eqref{recurrence}.
\end{itemize}

\subsection{Main contributions}
The main contributions of this paper can be summarized as follows:
\begin{itemize}
    \item We establish an operator-valued analogue of the combinatorial formula for 
    linear recurrences \eqref{recurrence}, valid in the context of commuting self-adjoint operator coefficients.
    \item As a concrete application, we provide an explicit formula for the powers of the operator companion matrix, 
    obtained by reformulating the recurrence as a first-order dynamical system.
    \item In the case where the coefficients are scalar, we obtain a Binet-type formula, which allows us to explicitly 
    characterize both the powers and the exponential of an algebraic operator in terms of Bell polynomials.
\end{itemize}

\section{Preliminaries}
\subsection{Notation and terminology}

For an integer \( r \geq 2 \), a \emph{multi-index} of length \( r \) is a vector
\[
\mathbf{k} := (k_0, k_1, \dots, k_{r-1}) \in \mathbb{Z}_+^r.
\]
Its \emph{length} is defined by
\[
|\mathbf{k}| := k_0 + k_1 + \cdots + k_{r-1},
\]
and its \emph{weighted degree}, i.e., the scalar product with the vector 
\(\mathbf{d} := (1,2,\dots,r)\), is given by
\[
\langle \mathbf{k}, \mathbf{d} \rangle := \sum_{j=0}^{r-1} (j+1) k_j 
= k_0 + 2k_1 + \cdots + r k_{r-1}.
\]
The \emph{multinomial coefficient} associated with 
\(\mathbf{k} \in \mathbb{Z}_+^r\) is
\[
\binom{|\mathbf{k}|}{\mathbf{k}} := \frac{(|\mathbf{k}|)!}{k_0! \cdots k_{r-1}!},
\]
while the \emph{multivariate monomial} of index \(\mathbf{k}\) associated with 
\(\mathbf{a} = (a_0, a_1, \dots, a_{r-1}) \in \mathbb{C}^r\) is defined as
\[
\mathbf{a}^{\mathbf{k}} := a_0^{k_0} a_1^{k_1} \cdots a_{r-1}^{k_{r-1}}.
\]

Throughout, \( \mathbf{B}(\mathcal{H}) \) denotes the algebra of bounded linear operators on a separable complex Hilbert space \( \mathcal{H} \); 
\( I_{\mathcal{H}} \) and \( 0_{\mathcal{H}} \) denote, respectively, the identity and the zero operator on \( \mathcal{H} \). \ 
We also write \( \mathbf{M}_d(\mathbb{C}) \) for the algebra of \( d \times d \) complex matrices, with \( I_d \) and \( 0_d \) denoting, respectively, the identity and the zero matrix in \( \mathbb{C}^d \).

\bigskip
\subsection{Some known results}

Let \((\gamma_n)_{n \geq 0}\) be a sequence of complex numbers determined by the initial conditions
\(
\gamma_0 = \alpha_0, \quad \gamma_1 = \alpha_1, \quad \dots, \quad \gamma_{r-1} = \alpha_{r-1},
\)
and satisfying the linear recurrence relation of order \(r \geq 2\):
\begin{equation}\label{eq:rec_scalar}
\gamma_{n+r} = a_{r-1} \gamma_{n+r-1} + a_{r-2} \gamma_{n+r-2} + \cdots + a_0 \gamma_n, 
\quad \forall n \geq 0,
\end{equation}
where \(a_i \in \mathbb{C}\) are fixed coefficients with \(a_{r-1} \neq 0\).  
\\Sequences defined by \eqref{eq:rec_scalar}, usually referred to as \emph{\(r\)-generalized Fibonacci sequences}, have been extensively studied in the literature. \ They exhibit a rich algebraic and analytic structure, with connections to the theory of characteristic polynomials, generating functions, special functions, and also to moment problems in the scalar setting (see, for example,~\cite{taher2001recursive, chidume2001fib,levesque1985,mouline1999,philippou1988}). \  The recurrence relation \eqref{eq:rec_scalar} is governed by the characteristic polynomial
\[
P(X) = X^r - a_{r-1} X^{r-1} - a_{r-2} X^{r-2} - \cdots - a_0,
\]
whose spectral properties completely determine the behavior of the sequence. \ In particular, the sequence admits a closed-form representation of Binet type:
\[
\gamma_n = \sum_{i=1}^s \left( \sum_{j=0}^{m_i - 1} \beta_{i,j} n^j \right) \lambda_i^n,
\]
where \(\lambda_1,\dots,\lambda_s\) denote the distinct roots of \(P\), \(m_i\) their respective multiplicities, and the coefficients \(\beta_{i,j}\) are uniquely determined by the initial conditions.  
\\An alternative description of the general term, involving combinatorial coefficients, is given in~\cite{levesque1985}. \ More precisely, for every \(n \geq r\),
\begin{equation}\label{eq:main}
\gamma_n = \rho(n, r) W_0 + \rho(n - 1, r) W_1 + \cdots + \rho(n - r + 1, r) W_{r-1},
\end{equation}
where
\[
W_s = a_{r-1} \gamma_s + a_{r-2} \gamma_{s+1} + \cdots + a_s \gamma_{r-1}, 
\quad 0 \leq s \leq r-1,
\]
and
\[
\rho(n, r) = \sum_{\substack{\mathbf{k} \in \mathbb{Z}_+^r \\ \langle \mathbf{k}, \mathbf{d} \rangle = n - r}} 
\binom{|\mathbf{k}|}{\mathbf{k}}\, \mathbf{a}^{\mathbf{k}},
\]
with the conventions \(\rho(r, r) = 1\) and \(\rho(n, r) = 0\) for \(n < r\). \  The relationship between the combinatorial coefficients in~\eqref{eq:main} and the spectral data of the characteristic polynomial \(P\) was further analyzed in~\cite{taher2016binet}, where refined connections with binomial identities and generalizations of Binet’s formula are established. \ These representations provide useful tools in number theory, combinatorics, and the analysis of recursive algorithms.

In the matrix case, there is no direct analog of the Binet-type formula due to the 
non-commutativity of matrices and the lack of simultaneous diagonalizability. \ 
However, in~\cite{taher2001linear}, the authors generalized the combinatorial formula~\eqref{eq:main} 
to the algebra of matrices, relying on the fact that pairwise commuting symmetric matrices 
are simultaneously diagonalizable in the same basis. \ Using the Cayley--Hamilton Theorem, 
they derived, as an application, explicit formulas for \(A^n\) (\(n \geq r\)) and \(e^{tA}\) for every 
\(r \times r\) matrix \(A\), expressed in terms of the coefficients of its characteristic polynomial 
and the matrices \(A^j\), where \(0 \leq j \leq r - 1\). \  More specifically, consider a family 
\(\mathbf{A} = \{A_0, A_1, \dots, A_{r-1}\}\) of \(d \times d\) symmetric matrices 
that are pairwise commuting, with \(A_{r-1} \neq 0_d\). \  
Let \((Y_n)_{n \geq 0} \subseteq \mathbf{M}_d(\mathbb{C})\) be the matrix-valued sequence defined recursively by  
\begin{equation}\label{eq:Yn}
\begin{cases}
Y_i = V_i, & \text{for } i = 0,\dots, r-1, \\[6pt]
Y_n = A_0 Y_{n-1} + A_1 Y_{n-2} + \cdots + A_{r-1} Y_{n-r}, & \text{for } n \geq r,
\end{cases}
\end{equation}
where \(\{V_0, \dots, V_{r-1}\} \subseteq \mathbf{M}_d(\mathbb{C})\) is a prescribed set of initial matrices.  \\Then, for every \(n \geq r\), the sequence satisfies the recurrence relation
$$
Y_n = \displaystyle\sum_{s=0}^{r-1} \rho(n-s, r)\, W_s,
$$
where
$$
W_s = \displaystyle\sum_{j=s}^{r-1} A_j V_{s + r - 1 - j}, \quad (s = 0, 1, \dots, r-1),
$$
and the matrix coefficients \(\rho(n,r)\) are given by
$$
\rho(r,r) = I_d, \quad \rho(p,r) = 0_d \quad (\text{for } p < r),
$$
and, for all \(n \geq r\),
$$
\rho(n,r) = \displaystyle\sum_{\substack{\mathbf{k} \in \mathbb{Z}_+^r \\ \langle \mathbf{k}, \mathbf{d} \rangle = n-r}}
\binom{|\mathbf{k}|}{\mathbf{k}}\, \mathbf{A}^\mathbf{k},
$$
with
$$
\mathbf{A}^\mathbf{k} = A_0^{k_0} A_1^{k_1} \cdots A_{r-1}^{k_{r-1}}.
$$
To extend the previous theorem to the infinite-dimensional case, we need some tools from spectral theory for commuting $n$-tuples of operators. \ Let $\mathbf{A} = (A_1, A_2, \dots, A_n)$ be a family of pairwise commuting self-adjoint operators on a Hilbert space $\mathcal{H}$. \ 
The following spectral theorem for commuting $n$-tuples of self-adjoint operators asserts:
\begin{thm}\cite[Theorem 5.23]{shmu}\label{j-spec}
There exists a unique spectral measure $E$ defined on the Borel 
$\sigma$-algebra $\mathcal{B}(\mathbb{R}^n)$ such that, for each $k = 1, \dots, n$,
\[
A_k = \int_{\sigma(\mathbf{A})} t_k \, dE(t_1, \dots, t_n),
\]
where $\sigma(\mathbf{A}) \subseteq \mathbb{R}^n$ is the joint spectrum of the $n$-tuple $\mathbf{A}$, i.e., the support of the spectral measure $E$. \ 
Moreover, this joint spectrum satisfies
\[
\sigma(\mathbf{A}) \subseteq \sigma(A_1) \times \cdots \times \sigma(A_n).
\] 
\end{thm}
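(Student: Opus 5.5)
The plan is to reduce everything to the one-variable spectral theorem and the Gelfand theory of commutative $C^*$-algebras. The operators $A_1,\dots,A_n$ are bounded, self-adjoint and pairwise commuting, so they generate a unital commutative $C^*$-subalgebra $\mathfrak{A}\subseteq \mathbf{B}(\mathcal{H})$. Let $\Omega$ be its maximal ideal space, i.e.\ the characters $\chi$ of $\mathfrak{A}$. The map $\chi\mapsto(\chi(A_1),\dots,\chi(A_n))$ is continuous and injective, since a character is determined by its values on generators. Because each $A_k$ is self-adjoint, its image is a compact set $K\subseteq\mathbb{R}^n$, and $\Omega$ is homeomorphic to $K$. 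Composing the Gelfand isomorphism with this homeomorphism identifies $\mathfrak{A}\cong C(K)$, with $A_k$ corresponding to the coordinate function $t_k$. This gives a continuous functional calculus $f\mapsto f(\mathbf{A})$ on $C(K)$.

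Next I construct $E$. For $x,y\in\mathcal{H}$, the functional $f\mapsto\langle f(\mathbf{A})x,y\rangle$ is bounded on $C(K)$. By the Riesz representation theorem it equals $\int f\,d\mu_{x,y}$ for a unique regular complex Borel measure $\mu_{x,y}$ on $K$, which we extend by zero to $\mathbb{R}^n$. For each Borel set $\Delta$, the form $(x,y)\mapsto\mu_{x,y}(\Delta)$ is bounded and sesquilinear, so it defines an operator $E(\Delta)$.

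The substantive step, and the one I expect to be the main obstacle, is showing that $E$ is a genuine spectral measure. That means:
\begin{itemize}
\item each $E(\Delta)$ is a self-adjoint idempotent;
\item $E(\Delta\cap\Delta')=E(\Delta)E(\Delta')$ and $E(\mathbb{R}^n)=I_{\mathcal{H}}$;
\item $E$ is countably additive in the strong operator topology.
\end{itemize}
I would handle the first two in three moves. First, multiplicativity $(fg)(\mathbf{A})=f(\mathbf{A})g(\mathbf{A})$ for continuous $f,g$ gives $\int fg\,d\mu_{x,y}=\int f\,d\mu_{g(\mathbf{A})x,y}$. Second, a monotone-class (or dominated convergence) argument lets $f$ pass to bounded Borel functions, then $g$ as well. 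Third, taking indicator functions yields idempotence and the intersection property, while self-adjointness follows from $\overline{\mu_{x,y}}=\mu_{y,x}$. Countable additivity is weak additivity of each $\mu_{x,y}$ combined with orthogonality of the projections, which upgrades it to strong convergence. The support of $E$ is then exactly $K$, which we denote $\sigma(\mathbf{A})$, and $A_k=\int t_k\,dE$ holds by construction.

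For the inclusion $\sigma(\mathbf{A})\subseteq\sigma(A_1)\times\cdots\times\sigma(A_n)$, note that for $\chi\in\Omega$ the number $\chi(A_k)-\lambda$ is nonzero whenever $A_k-\lambda$ is invertible. Hence $\chi(A_k)\in\sigma_{\mathfrak{A}}(A_k)$, and this equals $\sigma(A_k)$ by spectral permanence in $C^*$-algebras.

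For uniqueness, suppose $F$ is any spectral measure with $A_k=\int t_k\,dF$. Then $\int p\,dF=p(\mathbf{A})=\int p\,dE$ for every polynomial $p$ in $t_1,\dots,t_n$. I first check that $F$ is supported on a compact set. This holds because its marginal $F(\mathbb{R}\times\cdots\times\Delta\times\cdots\times\mathbb{R})$ is a spectral measure representing $A_k$, so by one-variable uniqueness it is supported on $\sigma(A_k)$. By Stone--Weierstrass the identity then extends to all continuous functions on a common compact box. Finally, uniqueness in the Riesz representation theorem gives $\langle F(\cdot)x,x\rangle=\langle E(\cdot)x,x\rangle$ for every $x$, so $F=E$.
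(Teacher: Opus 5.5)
Your argument is correct. It is not the paper's proof, because the paper gives no proof of its own: the statement is imported from Schmüdgen's book. As I recall, Schmüdgen builds the joint spectral measure as the product $E=E_{A_1}\times\cdots\times E_{A_n}$ of the pairwise commuting one-variable spectral measures, i.e.\ the unique spectral measure on $\mathcal{B}(\mathbb{R}^n)$ with $E(\Delta_1\times\cdots\times\Delta_n)=E_{A_1}(\Delta_1)\cdots E_{A_n}(\Delta_n)$. In that route the real work is the extension theorem for products of commuting spectral measures. Everything else then follows from the marginal identity $E(\mathbb{R}\times\cdots\times\Delta\times\cdots\times\mathbb{R})=E_{A_k}(\Delta)$:
the complement of $\sigma(A_1)\times\cdots\times\sigma(A_n)$ is a finite union of cylinders $\{t_k\notin\sigma(A_k)\}$, each of measure $E_{A_k}(\mathbb{R}\setminus\sigma(A_k))=0$; and any other representing measure has the same marginals by one-variable uniqueness, hence agrees with $E$ on rectangles.

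You instead rerun the Gelfand--Riesz construction of a resolution of the identity directly for the commutative algebra $C^*(I_{\mathcal{H}},A_1,\dots,A_n)$. This avoids the product-extension theorem and is self-contained for bounded tuples. It also gives more than is stated: $\sigma(\mathbf{A})$ is identified with the Gelfand spectrum of that algebra, and you get an isometric functional calculus $C(\sigma(\mathbf{A}))\to\mathbf{B}(\mathcal{H})$ on a compact joint spectrum. That is the form in which the result is actually used later, e.g.\ for the density of polynomials in the proof of Theorem~\ref{uni}. The price is that it needs the generated $C^*$-algebra, so unlike the product construction it does not reach unbounded, strongly commuting tuples. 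The inclusion into the product of spectra also costs you spectral permanence rather than being automatic. Your uniqueness step relies on one-variable uniqueness, just as the product route does.

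One detail to add. The statement only needs $\operatorname{supp}E\subseteq K$, which is immediate because every $\mu_{x,y}$ lives on $K$. Your claimed equality $\operatorname{supp}E=K$ needs a short Urysohn argument. Suppose $p\in K$ had an open neighborhood $U$ with $E(U)=0$. Then a continuous $f\ge 0$ on $K$ with $f(p)=1$ and $f=0$ on $K\setminus U$ would satisfy $f(\mathbf{A})=0$, contradicting $\|f(\mathbf{A})\|=\|f\|_\infty=1$.
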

A useful characterization of $n$-tuples of pairwise commuting operators can be viewed as an extension of the joint spectral theorem. \ This characterization parallels the classical single-operator case established by Halmos \cite{halmos}, which asserts that every cyclic self-adjoint operator is unitarily equivalent to a multiplication operator. \ The proof we present here follows the same strategy as in Halmos's approach.

\begin{thm}\label{uni} 
Let $\mathbf{A} = (A_1, A_2, \dots, A_n)$ be a family of pairwise commuting self-adjoint operators on $\mathcal{H}$, and assume that $\mathbf{A}$ admits a (joint) cyclic vector. \ Then, there exists a  measure space $(X, \mu)$, a unitary operator
\(
U : L^2(X, \mu) \to \mathcal{H},
\)
and measurable real-valued functions $a_k : X \to \mathbb{R}$ such that, for each $k = 1, \dots, n$,
\(
U^{-1} A_k U = M_{a_k},
\)
where $M_{a_k}$ is the multiplication operator defined by
\(
(M_{a_k} f)(x) = a_k(x) f(x), \text{ for all } f \in L^2(X, \mu) \text{ and almost every } x \in X.
\)
\end{thm}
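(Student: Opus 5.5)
We will follow Halmos's strategy for a single cyclic self-adjoint operator, using Theorem~\ref{j-spec}. Let $E$ be the joint spectral measure of $\mathbf{A}$ on $\mathcal{B}(\mathbb{R}^n)$ and let $\xi\in\mathcal{H}$ be a joint cyclic vector. The cyclicity assumption means that the linear span of $\{p(A_1,\dots,A_n)\xi : p\in\mathbb{C}[t_1,\dots,t_n]\}$ is dense in $\mathcal{H}$. We take $X=\sigma(\mathbf{A})$, which is compact because $\sigma(\mathbf{A})\subseteq\sigma(A_1)\times\cdots\times\sigma(A_n)$. We take $\mu(\Delta)=\langle E(\Delta)\xi,\xi\rangle$, a finite positive Borel measure on $X$, and set $a_k(t)=t_k$ to be the coordinate functions, which are real-valued and measurable.

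\emph{Defining $U$ and checking isometry.} For a polynomial $p$ we set $U_0 p = p(\mathbf{A})\xi$, where $p(\mathbf{A})=\int p\,dE$ by the functional calculus of $E$. The multiplicativity of the functional calculus and $p(\mathbf{A})^*=\bar p(\mathbf{A})$ give
\[
\|p(\mathbf{A})\xi\|^2=\langle |p|^2(\mathbf{A})\xi,\xi\rangle=\int_X |p(t)|^2\,d\mu(t)=\|p\|_{L^2(\mu)}^2 .
\]
Hence $U_0$ is well defined on equivalence classes in $L^2(\mu)$: if $p=0$ $\mu$-a.e., then $p(\mathbf{A})\xi=0$. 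It is also isometric. Polynomials are dense in $C(X)$ by Stone--Weierstrass, since $X$ is compact and polynomials separate points. $C(X)$ is in turn dense in $L^2(X,\mu)$ for a finite Borel measure on a compact metric space. So $U_0$ extends to an isometry $U:L^2(X,\mu)\to\mathcal{H}$. Its range is closed and contains all $p(\mathbf{A})\xi$, so by cyclicity it is all of $\mathcal{H}$. Therefore $U$ is unitary.

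\emph{Intertwining.} For each polynomial $p$ we have $A_kU p = A_k p(\mathbf{A})\xi = (t_kp)(\mathbf{A})\xi = U(M_{a_k}p)$. Each $M_{a_k}$ is bounded on $L^2(\mu)$ because $a_k$ is bounded on the compact set $X$. Both $A_kU$ and $UM_{a_k}$ are therefore bounded operators that agree on a dense set, so $A_kU=UM_{a_k}$, that is, $U^{-1}A_kU=M_{a_k}$.

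The main obstacle is the density step, namely that polynomials are dense in $L^2(X,\mu)$. This depends on the compactness of the joint spectrum and the regularity of finite Borel measures on $\mathbb{R}^n$. Both are available from Theorem~\ref{j-spec} and the boundedness of the $A_k$. If one prefers to avoid this, the alternative is to define $U$ on bounded Borel functions $f\mapsto (\int f\,dE)\xi$. The isometry computation is identical, and then bounded Borel functions are automatically dense in $L^2(\mu)$. Surjectivity still follows, since the range then contains the polynomial vectors.
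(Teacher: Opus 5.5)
Your proof is correct and follows essentially the same route as the paper. Both arguments use the scalar spectral measure $\mu(\Delta)=\langle E(\Delta)\xi,\xi\rangle$, the isometry $p\mapsto p(\mathbf{A})\xi$ on polynomials, density of polynomials in $L^2(\mu)$, surjectivity from cyclicity, and intertwining on polynomials extended by continuity. You are somewhat more careful than the paper on two points: you make explicit the compactness of $\sigma(\mathbf{A})$, which the Stone--Weierstrass density step needs, and you check that $U_0$ is well defined on $\mu$-equivalence classes.
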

\begin{proof}
Let $E$ denote the joint spectral measure associated with the $n$-tuple $(A_1,\dots,A_n)$, as \linebreak provided by Theorem~\ref{j-spec}.
For $\xi \in \mathcal{H}$, define a positive finite scalar measure $\mu_\xi$ on $\mathbb{R}^n$ by  
\[
\mu_\xi(\Delta) := \langle E(\Delta)\xi, \xi \rangle, 
\quad \Delta \in \mathcal{B}(\mathbb{R}^n).
\]  
Define a linear map on the space of complex polynomials in $n$ variables, $\mathcal{P}(\mathbb{R}^n)$, by
\[
U_0 : \mathcal{P}(\mathbb{R}^n) \longrightarrow \mathcal{H}, 
\qquad 
U_0(p) := p(A_1,\dots,A_n)\,\xi,
\]
For any polynomial $p\in \mathcal{P}(\mathbb{R}^n)$, we have
\[
\begin{array}{rcl}
\|U_0(p)\|^2 
&=& \langle p(A_1,\dots,A_n)\, \xi,\, p(A_1,\dots,A_n)\, \xi \rangle \\[2mm]
&=& \langle (\overline{p} p)(A_1,\dots,A_n)\, \xi,\, \xi \rangle \\[1mm]
&=& \displaystyle \int_{\mathbb{R}^n} |p(x_1,\dots,x_n)|^2 \, d\mu_\xi(x_1,\dots,x_n).
\end{array}
\]
Hence $U_0$ is an isometry if we identify the polynomial $p$ with the function $(x_1,\dots,x_n) \mapsto p(x_1,\dots,x_n)$ in $L^2(\mathbb{R}^n,\mu_\xi)$.
Polynomials in $n$ variables are dense in $L^2(\mathbb{R}^n, \mu_\xi)$ because continuous functions with compact support are dense, and they can be approximated by polynomials on the support of $\mu_\xi$. \ Thus the domain of $U_0$ is dense in $L^2(\mathbb{R}^n, \mu_\xi)$.
By continuity, $U_0$ extends to an isometry 
\(
U : L^2(\mathbb{R}^n, \mu_\xi) \longrightarrow \mathcal{H}.
\)

If $\xi$ is a cyclic vector for the $n$-tuple $(A_1,\dots,A_n)$, then the image of $U$ is the closure of 
\(
\{\, p(A_1,\dots,A_n)\, \xi : p \in \mathcal{P}(\mathbb{R}^n) \,\},
\)
which equals $\mathcal{H}$ by the cyclicity assumption. \ Therefore, $U$ is a unitary operator.  
\\For any polynomial $p \in \mathcal{P}(\mathbb{R}^n)$ and for each $j \in \{1,\dots,n\}$, we have
\[
\begin{aligned}
U^{-1} A_j U (p) 
&= U^{-1} \big( A_j \, (p(A_1,\dots,A_n)\, \xi) \big) \\
&= U^{-1} \big( (x_j \cdot p)(A_1,\dots,A_n)\, \xi \big) \\
&= x_j \cdot p.
\end{aligned}
\]
Since the polynomials form a dense subspace of $L^2(\mathbb{R}^n, \mu_\xi)$, this equality extends by continuity to all of $L^2(\mathbb{R}^n, \mu_\xi)$. \  
Thus, denoting by $M_{x_j}$ the multiplication operator by the $j$-th coordinate, we obtain
\[
U^{-1} A_j U = M_{x_j}, \text{ for every } j=1,\dots,n.
\]
\end{proof}

\begin{rem}
In the previous theorem, when no single cyclic vector exists, we construct a decomposition into cyclic subspaces. \ That is, there exists a family of closed subspaces $\{H_m\}_{m\in\mathbb{N}}$ such that
\[
\mathcal{H} = \bigoplus_{n \in \mathbb{N}} H_m,
\]
where each $H_m$ is reducing for the algebra $\mathcal{A} := C^*(A_1,\dots,A_n)$ and admits a cyclic vector $\xi_m \in H_m$.
\\Take a dense sequence $(e_k)_{k\ge1}$ in $\mathcal{H}$, since \(\mathcal{H}\) is separable Hilbert space. \ Inductively define
\[
H_1 := \overline{\mathcal{A} e_1}, \quad H_m := \overline{\mathcal{A} e_{k_m}}, \ m\ge2,
\]
where $e_{k_m}$ is the first vector not in $H_1\oplus\cdots\oplus H_{m-1}$. \ Each $H_m$ is nonzero, reducing, and cyclic, and the sum is dense:
\[
\mathcal{H} = \overline{\bigoplus_{m\in\mathbb{N}} H_m} = \bigoplus_{m\in\mathbb{N}} H_m.
\]
For each $H_m$ with cyclic vector $\xi_m$, define the scalar measure $\mu_m$ and unitary $U_m : L^2(\mathbb{R}^n, \mu_m) \to H_m$ as in the cyclic case. \  
Let $\mu := \displaystyle\sum_{m\in \mathbb{N}} \mu_m$ be the joint spectral measure. \ 
There is a natural isomorphism
\[
\bigoplus_{m\in\mathbb{N}} L^2(\mathbb{R}^n, \mu_m) \simeq L^2(\mathbb{R}^n, \mu),
\]
given by
\(
\Phi(f_1 \oplus f_2 \oplus \dots) :=\displaystyle \sum_{m\in\mathbb{N}} f_m,
\)
where the sum is well-defined $\mu$-almost everywhere because the measures $\mu_m$ are mutually singular on their supports. \ Define
\[
V : \bigoplus_{m \in \mathbb{N}} L^2(\mathbb{R}^n, \mu_m) \longrightarrow \mathcal{H}, \quad V(f_1 \oplus f_2 \oplus \dots) := \sum_{m\in \mathbb{N}} U_m f_m.
\]
Then the map
\[
U=V \circ \Phi^{-1} : L^2(\mathbb{R}^n, \mu) \longrightarrow \mathcal{H}, \,
U(f) := \sum_{m \in \mathbb{N}} U_m f_m, \ \text{with } \Phi^{-1}(f) = (f_m)_{m\in\mathbb{N}},
\]
is unitary. \ Moreover, for each $k = 1, \dots, n$,
\[
U^{-1} A_k U = M_{x_k} \quad \text{on } L^2(\mathbb{R}^n, \mu).
\]
\end{rem} 
\section{An Operator-valued Generalization of a Combinatorial Formula}
In the following, we consider a sequence of vectors \( (u_n)_{n\ge 0} \subseteq \mathcal{H} \) and a sequence of operators \( (T_n)_{n \ge 0} \subseteq \mathbf{B}(\mathcal{H}) \), satisfying, respectively, the following vector recurrence relation and the operator recurrence relation of order \( r \geq 2 \):
\begin{equation}\label{bin:vect}
u_{n+r} = A_0 u_n + A_1 u_{n+1} + \cdots + A_{r-1} u_{n+r-1}, \quad \forall n \geq 0,
\end{equation}
\begin{equation}\label{bin:ope}
T_{n+r} = A_0 T_n + A_1 T_{n+1} + \cdots + A_{r-1} T_{n+r-1}, \quad \forall n \geq 0,
\end{equation}
where the coefficients \( \mathbf{A} = (A_0, A_1, \dots, A_{r-1}) \) are bounded, self-adjoint, and pairwise commuting operators, with \( A_{r-1} \neq 0_{\mathcal{H}} \).

These recurrence relations play a central role in the study of discrete-time linear systems in both finite- and infinite-dimensional settings. \ 
The vector recurrence \eqref{bin:vect} generalizes classical linear recurrences for scalar and vector sequences, whereas the operator recurrence \eqref{bin:ope} provides a natural framework for analyzing sequences of operators in Hilbert spaces. 

We first consider the vector recurrence \eqref{bin:vect} as a preliminary step to study the operator recurrence \eqref{bin:ope}, since the latter is more general. \ Indeed, once a solution of \eqref{bin:vect} is known, the operator case can be addressed by evaluating each operator \(T_n\) on vectors \(h \in \mathcal{H}\), i.e.,
\(
u_n = T_n h.
\)
This observation reduces the study of the operator sequence to a family of vector sequences, which can then be analyzed using classical techniques.
By introducing the \emph{operator companion matrix} 
\begin{equation}\label{cm}
\mathbf{B} =
\begin{pmatrix}
A_{r-1} & A_{r-2} & \cdots & A_{1} & A_{0} \\
I_{\mathcal{H}} & 0 & \cdots & 0 & 0 \\
0 & I_{\mathcal{H}} & \ddots & \vdots & \vdots \\
\vdots & \ddots & \ddots & 0 & 0 \\
0 & \cdots & 0 & I_{\mathcal{H}} & 0
\end{pmatrix} \in \mathbf{B}(\mathcal{H}^{(r)}),
\end{equation}
we can rewrite the vector recurrence \eqref{bin:vect} as a first-order system in \(\mathcal{H}^{(r)}\):
\[
Y_{n+1} = \mathbf{B} Y_n, \quad Y_n := 
\begin{pmatrix} u_{n+r-1} \\ u_{n+r-2} \\ \vdots \\ u_n \end{pmatrix} \in \mathcal{H}^{(r)}.
\]
This formulation naturally expresses the evolution of the sequence through the powers of \(\mathbf{B}\):
\[
Y_n = \mathbf{B}^n Y_0, \quad Y_0 := 
\begin{pmatrix} u_{r-1} \\ u_{r-2} \\ \vdots \\ u_0 \end{pmatrix}.
\] 
Thus, the companion operator matrix \(\mathbf{B}\) provides a natural framework linking higher-order recurrences to first-order dynamical systems, while \(\mathbf{B}^n\) plays a central role in deriving explicit combinatorial and spectral formulas for both scalar and operator sequences \cite{benkhaldoun2021periodic,taher2003, chen1996companion}.

However, computing the powers of the companion operator matrix \(\mathbf{B}\) directly is highly nontrivial: there is no general algorithm for \(\mathbf{B}^n\) when the entries are noncommutative operators, especially in infinite-dimensional Hilbert spaces. \ Therefore, it is essential to develop a method to represent the vectors \(u_n\) explicitly in terms of the initial data \(u_0, \dots, u_{r-1}\). \ Once such a representation is established, the corresponding powers \(\mathbf{B}^n\) can be deduced indirectly, providing a systematic approach to the study of both the vector and operator recurrences. \ The following Remark illustrates a key aspect of our approach.
\begin{rem}
Let \( (u_n)_{n \ge 0} \subseteq \mathcal{H} \) be a sequence satisfying the operator-valued recurrence relation~\eqref{bin:vect}, and let 
\[
f_n := U^{-1} u_n \in L^2(X, \mu), \quad n \geq 0,
\]
as in Theorem~\ref{uni}. \ Then the sequence \( (f_n)_{n \ge 0} \subseteq L^2(X, \mu) \) satisfies the scalar recurrence relation
\begin{equation}\label{bin:fun}
f_{n+r}(x) = a_0(x)\, f_n(x) + a_1(x)\, f_{n+1}(x) + \cdots + a_{r-1}(x)\, f_{n+r-1}(x),
\end{equation}
for \(\mu\)-almost every \(x \in X\).
\end{rem}
From the scalar combinatorial formula~\eqref{eq:main}, we have the following lemma.

\begin{lem}\label{etu}
The general solution of~\eqref{bin:fun} satisfies, for every \(n \geq r\),
\[
f_n = \sum_{s=0}^{r-1} \rho(n-s, r ; \cdot)\, w_s, \quad \mu\text{-a.e.},
\]
where
\[
w_s := \sum_{j=s}^{r-1} a_j \, f_{\,s+r-1-j}, \quad s = 0, \dots, r-1,
\]
and the coefficients \(\rho(m,r)\) are given by the combinatorial formula
\[
\rho(n, r; \cdot) := 
\sum_{\substack{\mathbf{k} \in \mathbb{Z}_+^r \\ \langle \mathbf{k}, \mathbf{d} \rangle = n-r}}
\binom{|\mathbf{k}|}{\mathbf{k}} \,
a_0^{k_0} a_1^{k_1} \cdots a_{r-1}^{k_{r-1}},
\]
with the conventions
\[
\rho(r,r; \cdot) = 1, 
\qquad 
\rho(m,r,; \cdot) = 0 \quad \text{for } m < r.
\]
\end{lem}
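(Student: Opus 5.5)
The plan is a pointwise reduction to the scalar combinatorial formula~\eqref{eq:main}. Each $f_n$ is only an element of $L^2(X,\mu)$, so I will first fix representatives. For every $n$, choose a measurable function representing $f_n$. Also fix real-valued measurable representatives $a_0,\dots,a_{r-1}$ as provided by Theorem~\ref{uni}.

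By the preceding Remark, relation~\eqref{bin:fun} holds off a $\mu$-null set $N_n$ for each $n\ge 0$. Set $N:=\bigcup_{n\ge0}N_n$. This is a countable union of null sets, hence $\mu(N)=0$. For every $x\in X\setminus N$, the complex sequence $\gamma_n:=f_n(x)$ satisfies the scalar recurrence~\eqref{eq:rec_scalar} for all $n\ge 0$, with coefficients $a_j(x)$ and initial data $\alpha_i=f_i(x)$ for $i=0,\dots,r-1$.

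Next I apply~\eqref{eq:main} at each such fixed $x$. This gives, for all $n\ge r$,
\[
f_n(x)=\sum_{s=0}^{r-1}\rho(n-s,r;x)\,W_s(x),
\qquad
W_s(x)=\sum_{j=s}^{r-1}a_j(x)f_{s+r-1-j}(x).
\]
To match the statement, I re-index the sum $a_{r-1}\gamma_s+a_{r-2}\gamma_{s+1}+\cdots+a_s\gamma_{r-1}$. The coefficient $a_j$ multiplies $\gamma_{s+r-1-j}$, so $W_s(x)=w_s(x)$. The scalar $\rho(m,r)$ evaluated at $\mathbf a=(a_0(x),\dots,a_{r-1}(x))$ is exactly $\rho(m,r;x)$. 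The conventions for $m\le r$ carry over unchanged, since they do not depend on $x$.

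The main obstacle is the hypothesis $a_{r-1}\neq0$ in the scalar setting. The function $a_{r-1}(x)$ may vanish on a set of positive measure even though $A_{r-1}\ne 0_{\mathcal H}$. I expect to resolve this by checking that~\eqref{eq:main} does not actually use $a_{r-1}\neq0$. Indeed, the formula can be verified directly by induction on $n$ from the identity
\[
\rho(n,r)=\sum_{j=0}^{r-1}a_j\,\rho(n-1-j,r), \qquad n>r,
\]
with $\rho(m,r)$ indexed as in Lemma~\ref{etu}. This identity is the multinomial recursion obtained by removing one part of size $j+1$ from a composition. It is purely algebraic and valid for arbitrary complex coefficients, so it holds at every $x\notin N$. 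The induction is anchored by checking the cases $n=r,\dots,2r-1$ directly. In those cases only the $\rho$'s with index in $[r,2r-1]$ are nonzero.

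Finally, I note that $\rho(m,r;\cdot)$ is a polynomial in the measurable functions $a_j$, so it is measurable. Hence both sides are measurable functions that agree on $X\setminus N$, which gives the identity $\mu$-a.e. If membership in $L^2$ is wanted, it follows from the essential boundedness of the $a_j$, since $\|a_j\|_\infty\le\|A_j\|$.
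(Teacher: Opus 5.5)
Your pointwise strategy is the one the paper intends: the paper derives the lemma in one line by applying \eqref{eq:main} at each point. Your measure-theoretic bookkeeping is sound, and more careful than the paper's, namely the countable union of null sets, the measurability of $\rho(m,r;\cdot)$, and the remark that nothing needs $a_{r-1}(x)\neq 0$.

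The genuine problem is that the formula you import does not fit the recurrence. In \eqref{bin:fun}, exactly as in \eqref{eq:rec_scalar}, the coefficient $a_{r-1}$ multiplies the most recent term $f_{n+r-1}$. In the combinatorial formula, $a_j$ carries weight $j+1$ in $\rho$ and $w_0=\sum_{j}a_jf_{r-1-j}$. That is the formula for the \emph{reversed} recurrence, in which $a_0$ multiplies the most recent term (the convention used in \eqref{eq:Yn}).

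The base case you say you would check directly already fails at $n=r$. The lemma gives $f_r=w_0=a_0f_{r-1}+a_1f_{r-2}+\cdots+a_{r-1}f_0$. By contrast, \eqref{bin:fun} with $n=0$ gives $f_r=a_0f_0+a_1f_1+\cdots+a_{r-1}f_{r-1}$. For a concrete instance, take $r=2$, $\mu$ a point mass, $a_0=1$, $a_1=2$, $f_0=0$, $f_1=1$; then $f_2=2$ but $w_0=1$.

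The inductive step fails for the same reason. Your identity $\rho(n,r)=\sum_{j}a_j\rho(n-1-j,r)$ shows that, for $n\ge 2r$, the right-hand side $F_n$ satisfies $F_n=a_0F_{n-1}+\cdots+a_{r-1}F_{n-r}$. That is not \eqref{bin:fun} unless $a_j=a_{r-1-j}$ a.e. So the statement as written is false, and the verification you assert cannot be carried out. The paper's one-line appeal to \eqref{eq:main} has the same reversal, since \eqref{eq:main} is quoted for \eqref{eq:rec_scalar}.

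The repair is a re-indexing. Set $b_j:=a_{r-1-j}$, so that \eqref{bin:fun} reads $f_n=\sum_{j=0}^{r-1}b_jf_{n-1-j}$ for $n\ge r$. Then build $\rho(m,r;\cdot)$ and $w_s=\sum_{j=s}^{r-1}b_jf_{s+r-1-j}$ from the $b_j$. Equivalently, keep the formulas and write the recurrence as $f_{n+r}=a_{r-1}f_n+\cdots+a_0f_{n+r-1}$. With that change your pointwise argument goes through verbatim.

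A clean way to handle the base cases and the induction at once, off the null set, is the formal identity
\[
\sum_{n\ge r}f_nz^n=\frac{z^r\sum_{s=0}^{r-1}w_sz^s}{1-\sum_{j=0}^{r-1}b_jz^{j+1}},
\]
combined with the fact that $\rho(m,r;\cdot)$ is the coefficient of $z^{m-r}$ in $\bigl(1-\sum_{j}b_jz^{j+1}\bigr)^{-1}$. This also makes clear that no nondegeneracy of any coefficient is used.
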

We derive the following result.
\begin{thm}\label{mainr}
Let \((u_n)_{n \geq 0} \subseteq \mathcal{H}\) be a sequence satisfying the vector-valued recurrence relation \eqref{bin:vect}. \  
Then, for all \(n \geq r\), the following explicit expression holds:
\[
u_n = \sum_{s=0}^{r-1} \rho(n - s, r; \mathbf{A}) \, W_s,
\]
where:
\begin{itemize}
  \item the operator coefficients \(\rho(n, r; \mathbf{A})\) are given by
  \begin{equation}\label{exp1}
  \rho(m, r; \mathbf{A}) := \sum_{\substack{\mathbf{k} \in \mathbb{Z}_+^r \\ \langle \mathbf{k}, \mathbf{d} \rangle = m - r}} 
  \binom{|\mathbf{k}|}{\mathbf{k}} \, \mathbf{A}^{\mathbf{k}},
  \end{equation}
  with the multi-index notation \(\mathbf{A}^{\mathbf{k}} = A_0^{k_0} \cdots A_{r-1}^{k_{r-1}}\), and the conventions
  \begin{equation}\label{exp2}
  \rho(r, r; \mathbf{A}) = I_{\mathcal{H}} \quad \text{and} \quad \rho(m, r; \mathbf{A}) = 0_{\mathcal{H}} \quad \text{for } m < r.
  \end{equation}

  \item The vectors \(W_s \in \mathcal{H}\) are defined by
  \[
  W_s := \sum_{j=s}^{r-1} A_j \, u_{s + r - 1 - j}, \quad \text{for } s = 0, \dots, r-1.
  \]
\end{itemize}
\end{thm}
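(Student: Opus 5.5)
The plan is to transport the problem to the function model of Theorem~\ref{uni} (and its Remark for the non-cyclic case), apply Lemma~\ref{etu} pointwise, and pull the resulting identity back to $\mathcal{H}$.

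\textbf{Step 1: function model.} Fix the unitary $U : L^2(X,\mu) \to \mathcal{H}$ with $U^{-1} A_j U = M_{a_j}$ for $j=0,\dots,r-1$. In the non-cyclic case we take $X=\mathbb{R}^r$ and $a_j = x_j$, as in the Remark. Each $A_j$ is bounded, so $\|a_j\|_\infty = \|A_j\|$ and every $a_j$ is essentially bounded. Set $f_n := U^{-1} u_n$. Applying $U^{-1}$ to \eqref{bin:vect} and using $U^{-1}A_j = M_{a_j}U^{-1}$ shows that $(f_n)$ satisfies \eqref{bin:fun} $\mu$-a.e., as already noted in the Remark preceding Lemma~\ref{etu}.

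\textbf{Step 2: pointwise scalar formula.} For $\mu$-a.e.\ fixed $x$, the sequence $(f_n(x))_n$ is a scalar solution of a recurrence with coefficients $a_j(x)$. Lemma~\ref{etu}, which is \eqref{eq:main} applied pointwise, therefore gives
\[
f_n = \sum_{s=0}^{r-1} \rho(n-s,r;\cdot)\, w_s, \qquad w_s = \sum_{j=s}^{r-1} a_j f_{s+r-1-j}.
\]
Two small points need care here.
\begin{itemize}
\item Countably many null sets (one for each $n$) are discarded, so their union is still null.
\item The points where $a_{r-1}(x)=0$ are harmless. The proof of \eqref{eq:main} is purely algebraic, by induction via the recurrence, and does not use $a_{r-1}\neq 0$. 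If needed, I would re-verify the identity directly by induction on $n$, using $\rho(m,r)=\sum_{j=0}^{r-1} a_j\,\rho(m-j-1,r)$ for $m>r$, which is the standard multinomial recursion.
\end{itemize}

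\textbf{Step 3: lifting back.} Two functional-calculus facts are needed.
\begin{itemize}
\item Since the multiplication map $\varphi \mapsto M_\varphi$ is multiplicative and linear on $L^\infty(\mu)$, we have $M_{\rho(m,r;\cdot)} = \rho(m,r; M_{a_0},\dots,M_{a_{r-1}})$. This holds because $\rho(m,r;\cdot)$ is a finite polynomial in the $a_j$, and the conventions $\rho(r,r)=1$ and $\rho(m,r)=0$ for $m<r$ correspond to the identity and the zero operator. Conjugating by $U$ gives $U M_{\rho(m,r;\cdot)} U^{-1} = \rho(m,r;\mathbf{A})$, because $\mathbf{A}^{\mathbf{k}}$ is an ordered product and commutativity makes the order irrelevant.
\item In the same way, $w_s = U^{-1} W_s$.
\end{itemize}
Applying $U$ to the identity of Step 2 then yields
\[
u_n = \sum_{s=0}^{r-1} \rho(n-s,r;\mathbf{A})\, W_s.
\]

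\textbf{Main obstacle.} The delicate part is justifying the model in the non-cyclic case: the Remark's gluing of cyclic pieces into a single $L^2(\mu)$ is sketchy. If it is problematic, I would bypass it. Either work directly on each reducing cyclic subspace $H_m$, where Theorem~\ref{uni} applies, and sum the identities, since all operators involved commute with the projections onto $H_m$. Or, more simply, avoid spectral theory entirely: prove the formula by strong induction on $n$ in the commutative algebra generated by $\mathbf{A}$. The key inputs would be the operator identity $\rho(m,r;\mathbf{A})=\sum_{j=0}^{r-1}A_j\,\rho(m-j-1,r;\mathbf{A})$ for $m>r$, obtained from the multinomial recursion $\binom{|\mathbf{k}|}{\mathbf{k}}=\sum_j\binom{|\mathbf{k}|-1}{\mathbf{k}-e_j}$, together with a direct check of the base cases $n=r,\dots,2r-1$. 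The spectral route is the one the paper sets up, so I would follow it and keep the induction as a fallback.
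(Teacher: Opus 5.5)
Your three steps match the paper's proof exactly: pass to $f_n=U^{-1}u_n$, apply Lemma~\ref{etu} pointwise, then conjugate $\rho(m,r;\cdot)$ back to $\rho(m,r;\mathbf{A})$. However, Step~2 fails, and the paper's proof fails at the same point, because the formula does not hold for the recurrence \eqref{bin:vect} as written. The coefficients $\rho(m,r)$ attach weight $j+1$ to $a_j$, and $W_s=\sum_{j\ge s}A_j u_{s+r-1-j}$. Both encode the convention in which $A_j$ is the coefficient at lag $j+1$, namely $u_{n+r}=A_0u_{n+r-1}+A_1u_{n+r-2}+\cdots+A_{r-1}u_n$, which is the convention of \eqref{eq:Yn}. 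Your identity $\rho(m,r)=\sum_j a_j\rho(m-j-1,r)$ is exactly that lag-$(j+1)$ recurrence. In \eqref{bin:vect} and \eqref{bin:fun}, and in the paper's transcription \eqref{eq:rec_scalar} of the scalar result, the order is reversed: $A_0$ multiplies the oldest term $u_n$. Already at $n=r$ the claimed formula gives $u_r=W_0=\sum_j A_j u_{r-1-j}$, while the recurrence gives $u_r=\sum_j A_j u_j$. For a concrete counterexample, take $r=2$, $A_0=0_{\mathcal{H}}$, $A_1=I_{\mathcal{H}}$ and $u_0\neq u_1$. The recurrence gives $u_2=u_1$, but the formula gives $u_2=A_0u_1+A_1u_0=u_0$. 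So Lemma~\ref{etu} and \eqref{eq:main}, as stated, are false, and the appeal to them is not valid. The re-verification by induction that you mention, and the base-case check in your fallback, would both break at $n=r$; no argument proves the statement verbatim.

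The repair is a reindexing. Either read the recurrence as $u_{n+r}=\sum_{j=0}^{r-1}A_j u_{n+r-1-j}$, or keep \eqref{bin:vect} and replace $A_j$ by $A_{r-1-j}$ in both $\rho(\cdot,r;\mathbf{A})$ and $W_s$. With that correction, your fallback is a complete proof and more elementary than the paper's; it uses only commutativity, not self-adjointness or the spectral theorem.
\begin{itemize}
\item For $n\ge 2r$, apply the recurrence and the identity $\rho(m,r;\mathbf{A})=\sum_j A_j\rho(m-j-1,r;\mathbf{A})$ for $m>r$.
\item For $r\le n\le 2r-1$, split the recurrence into two groups. The terms with index $\ge r$ are handled by induction and regrouped with the same identity. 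The terms with index $<r$ assemble exactly into $W_{n-r}$.
\end{itemize}
If you keep the spectral route, your doubt about the Remark is justified. A single $L^2(\mathbb{R}^r,\mu)$ with $a_j=x_j$ cannot model an $r$-tuple with multiplicity, for example $A_j=a_jI$ on $\mathbb{C}^2$. Also, the measures $\mu_m$ need not be mutually singular. Work on each reducing cyclic subspace $H_m$ separately, or on a disjoint union of copies of $\mathbb{R}^r$, and sum, as you propose.
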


\begin{proof}
Under the assumption, the sequence $f_n=U^{-1}u_n$ satisfies  equation \eqref{bin:fun}.
From lemma \ref{etu}, we have for every \( n \geq r \):
\[
f_n = \sum_{s=0}^{r-1} \rho(n - s, r; \cdot) \, w_s, \quad \mu\text{-a.e },
\]
where
\[
w_s := \sum_{j=s}^{r-1} a_j \, f_{s + r - 1 - j},  \quad s = 0, \ldots, r-1,
\]
and the coefficients \(\rho(n, r; \cdot)\) are defined by the combinatorial formula:
\[
\rho(n, r; \cdot) := \sum_{\substack{\mathbf{k} \in \mathbb{Z}_+^r \\ \langle \mathbf{k}, \mathbf{d} \rangle = n - r}} 
\binom{|\mathbf{k}|}{\mathbf{k}} a_0^{k_0} a_1^{k_1} \cdots a_{r-1}^{k_{r-1}},
\]
with the conventions
\[
\rho(r, r; \cdot) = 1, \quad \text{and} \quad \rho(m, r; \cdot) = 0 \quad \text{for } m < r.
\]
Now, define the vectors in \(\mathcal{H}\):
\[
W_s := \sum_{j=s}^{r-1} A_j u_{s + r - 1 - j} = \sum_{j=s}^{r-1} A_j U f_{s + r - 1 - j}.
\]
Applying \( U^{-1} \), we get
\[
U^{-1} W_s = \sum_{j=s}^{r-1} M_{a_j} f_{s + r - 1 - j} = w_s.
\]
Hence, for every \( n \geq r \),
\[
f_n = \sum_{s=0}^{r-1} \rho(n - s, r; \cdot) U^{-1} W_s.
\]
Applying \( U \) to both sides yields
\[
u_n =Uf_n=\sum_{s=0}^{r-1} U  \rho(n - s, r; \cdot)  U^{-1}W_s  = \sum_{s=0}^{r-1} \rho(n - s, r; \mathbf{A}) \, W_s,
\]
where the operator \(\rho(n - s, r; \mathbf{A})\) is defined by functional calculus as
\begin{align*}
\rho(n - s, r; \mathbf{A}) 
   &:= U \rho(n - s, r; \cdot) U^{-1} \\
   &= \rho(n, r; \cdot) \\
   &= \sum_{\substack{\mathbf{k} \in \mathbb{Z}_+^r \\ \langle \mathbf{k}, \mathbf{d} \rangle = n - r}} 
      \binom{|\mathbf{k}|}{\mathbf{k}} \, 
      U a_0^{k_0} a_1^{k_1} \cdots a_{r-1}^{k_{r-1}} U^{-1} \\
   &= \sum_{\substack{\mathbf{k} \in \mathbb{Z}_+^r \\ \langle \mathbf{k}, \mathbf{d} \rangle = n - r}} 
      \binom{|\mathbf{k}|}{\mathbf{k}} \, \mathbf{A}^{\mathbf{k}} .
\end{align*}
and
\(
\rho(r, r; \mathbf{A}) =I_{\mathcal{H}}, \, 
\rho(m, r; \mathbf{A})=0_{\mathcal{H}},  \text{ for } m < r.\)
This concludes the proof.

\end{proof}
As a corollary, we obtain the following result, which extends~\cite[Proposition~2.1]{taher2001linear} 
to the setting of operator algebras on infinite-dimensional Hilbert spaces.

\begin{thm}
Let \((T_n)_{n \geq 0} \subseteq \mathcal{B}(\mathcal{H})\) be a sequence of bounded operators satisfying the operator-valued linear recurrence relation \eqref{bin:ope} of order \(r\).
Then, for all \(n \geq r\), the following explicit expression holds:
\[
T_n = \sum_{s=0}^{r-1} \rho(n - s, r; \mathbf{A}) \, W_s,
\]
 where
 \begin{itemize}
     \item 
 The operator coefficients \(\rho(m, r; \mathbf{A})\)  are defined as in \eqref{exp1} and \eqref{exp2}.
 \item The operators \(W_s \in \mathbf{B}(\mathcal{H})\) are defined by
  \[
  W_s := \sum_{j=s}^{r-1} A_j \, T_{s + r - 1 - j}, \quad \text{for } s = 0, \ldots, r-1,
  \]
 \end{itemize}
\end{thm}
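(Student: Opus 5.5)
The plan is to deduce the statement directly from Theorem~\ref{mainr} by evaluating the operators on vectors, as anticipated in the discussion preceding the companion matrix \eqref{cm}. Fix an arbitrary $h \in \mathcal{H}$ and set $u_n := T_n h$ for $n \ge 0$. Applying both sides of the operator recurrence \eqref{bin:ope} to $h$ gives
\[
u_{n+r} = A_0 u_n + A_1 u_{n+1} + \cdots + A_{r-1} u_{n+r-1}, \quad \forall n \ge 0,
\]
so $(u_n)_{n\ge0}$ satisfies the vector recurrence \eqref{bin:vect} with the same commuting self-adjoint coefficients $\mathbf{A}$.

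Theorem~\ref{mainr} then yields, for every $n \ge r$,
\[
T_n h = u_n = \sum_{s=0}^{r-1} \rho(n-s, r; \mathbf{A})\, W_s^{(h)}, \qquad W_s^{(h)} := \sum_{j=s}^{r-1} A_j\, u_{s+r-1-j} = \sum_{j=s}^{r-1} A_j T_{s+r-1-j}\, h .
\]
By definition of the operator $W_s$ in the statement, $W_s^{(h)} = W_s h$. Hence $T_n h = \bigl(\sum_{s=0}^{r-1} \rho(n-s,r;\mathbf{A}) W_s\bigr) h$ for every $h$. Since $h$ is arbitrary, the operator identity follows. Boundedness of each $W_s$ and of $\rho(m,r;\mathbf{A})$ is clear, since these are finite sums of products of bounded operators.

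The only point needing care is that the vector vectors $W_s^{(h)}$ in Theorem~\ref{mainr} depend linearly on $h$ through the fixed operators $W_s$. The coefficients $\rho(n-s,r;\mathbf{A})$ are independent of $h$: they are the explicit polynomial expressions \eqref{exp1}--\eqref{exp2} in $\mathbf{A}$, not anything tied to a cyclic subspace of $h$. This is immediate from the formulas, so I expect no real obstacle.

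If one wanted to avoid relying on the unitary model used in the proof of Theorem~\ref{mainr}, a fallback is a direct induction on $n$. Using commutativity of the $A_j$, one checks that $\rho(m,r;\mathbf{A}) = \sum_{j=0}^{r-1} A_j\, \rho(m-1-j, r;\mathbf{A})$ for $m > r$; this is the standard multinomial recursion, obtained by removing one part. From this, the right-hand side satisfies \eqref{bin:ope}, and it agrees with $T_r,\dots,T_{2r-1}$ by the conventions \eqref{exp2}.
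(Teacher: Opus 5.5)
Your main argument (fix $h$, set $u_n:=T_nh$, apply Theorem~\ref{mainr}) is exactly the paper's one-line proof, and as a reduction it is sound. The problem lies upstream, in the formula itself, and your fallback would have caught it had you carried it out.

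\textbf{The fallback does not close.} Your identity $\rho(m,r;\mathbf{A})=\sum_{j=0}^{r-1}A_j\,\rho(m-1-j,r;\mathbf{A})$ for $m>r$ is correct. However, it places $A_j$ at lag $j+1$. So $R_n:=\sum_{s=0}^{r-1}\rho(n-s,r;\mathbf{A})W_s$ satisfies $R_{n+r}=A_{r-1}R_n+A_{r-2}R_{n+1}+\cdots+A_0R_{n+r-1}$ for $n\ge r$. By contrast, \eqref{bin:ope} places $A_j$ at lag $r-j$.

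The base case does not follow ``by the conventions'' either. From \eqref{exp2} you get $R_r=W_0=\sum_{j=0}^{r-1}A_jT_{r-1-j}$, while \eqref{bin:ope} gives $T_r=\sum_{j=0}^{r-1}A_jT_j$.

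\textbf{The statement as printed is false.} Take $r=2$, $A_0=I_{\mathcal H}$, $A_1=2I_{\mathcal H}$, $T_0=I_{\mathcal H}$, $T_1=0_{\mathcal H}$. The recurrence gives $T_2=A_0T_0+A_1T_1=I_{\mathcal H}$. The formula gives $T_2=\rho(2,2;\mathbf{A})W_0=A_0T_1+A_1T_0=2I_{\mathcal H}$. Theorem~\ref{mainr} has the same index reversal, so reducing to it, as both you and the paper do, only transfers the error.

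\textbf{The fix.} The formula is the correct one for the recurrence $T_n=A_0T_{n-1}+A_1T_{n-2}+\cdots+A_{r-1}T_{n-r}$, which is the convention of \eqref{eq:Yn}. To match \eqref{bin:ope}, you must replace $A_j$ by $A_{r-1-j}$ both in $\rho(m,r;\mathbf{A})$ and in $W_s$.

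With that correction, your fallback is actually preferable to the paper's spectral route. It is purely algebraic and uses only that the $A_j$ commute with each other. It needs no self-adjointness, no joint spectral theorem, and no cyclic decomposition.

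The base cases $n=r,\dots,2r-1$ still require a genuine check. The cleanest way is generating functions, in the convention of \eqref{eq:Yn}:
\begin{itemize}
\item Set $Q(z):=I_{\mathcal H}-\sum_{j=0}^{r-1}A_jz^{j+1}$. The recurrence then gives $Q(z)\sum_{n\ge r}T_nz^n=\sum_{s=0}^{r-1}W_sz^{r+s}$.
\item By the multinomial theorem (using commutativity), the coefficient of $z^p$ in $Q(z)^{-1}$ is $\rho(p+r,r;\mathbf{A})$.
\end{itemize}
Multiplying out on the left yields the formula for all $n\ge r$ at once.
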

\begin{proof}
    For each \(h \in \mathcal{H}\), we apply Theorem~\ref{mainr} to the sequence \((u_n)_{n \geq 0}\) defined by \(u_n := T_n h\).
\end{proof}

\bigskip
\subsection{Computation of powers of the operator-valued companion matrix}
Let \( (u_n)_{n\ge 0} \subseteq \mathcal{H} \) satisfy the recurrence~\eqref{bin:vect}.  
To compute the powers of the operator-valued companion matrix \(\mathbf{B}\), which is the block companion matrix given in \eqref{cm}, recall the state vector
\[
Y_n := 
\begin{pmatrix}
u_{n+r-1} \\[1mm]
u_{n+r-2} \\[1mm]
\vdots \\[1mm]
u_n
\end{pmatrix} \in \mathcal{H}^{(r)},
\]
satisfying
\[
Y_n = \mathbf{B}^n \, Y_0, \quad n \in \mathbb{N}.
\]
When the operators \(A_0, \dots, A_{r-1}\) commute, Theorem \ref{mainr} provides an explicit combinatorial formula for the solution of the system:
\[
u_n = \sum_{s=0}^{r-1} \rho(n - s, r; \mathbf{A}) \, W_s,
\]
where
\[
W_s := \sum_{j=s}^{r-1} A_j \, u_{s + r - 1 - j}, \quad s=0,\dots,r-1.
\]

\begin{thm}[Entries of $\mathbf{B}^n$]
Under the previous notations, for all $n \ge 0$, the entries of $\mathbf{B}^n = (B^{(n)}_{i,k})_{0 \le i,k \le r-1}$ are given by
\[
B^{(n)}_{i,k} = \sum_{s=0}^{r-1} \rho(n+i-s, r; \mathbf{A}) \, C_{s,k},
\]
where 
\[
C_{s,k} =
\begin{cases}
A_{s + k}, & 0 \le s + k \le r-1, \\
0, & \text{otherwise}.
\end{cases}
\]
\end{thm}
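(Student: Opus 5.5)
The plan is to read off the entries of $\mathbf{B}^n$ from the identity $Y_n=\mathbf{B}^nY_0$, by expressing each component of $Y_n$ as an explicit linear combination of the initial vectors $u_0,\dots,u_{r-1}$ using Theorem~\ref{mainr}. Since $Y_0$ is arbitrary in $\mathcal{H}^{(r)}$, every choice of initial data $(u_0,\dots,u_{r-1})$ is admissible. The coefficient operator multiplying the initial vector in the $k$-th slot of $Y_0$, taken in the component of $Y_n$ attached to row $i$, is therefore exactly $B^{(n)}_{i,k}$. Note that this coefficient is a polynomial in the commuting $A_j$, so it does not depend on the initial data.

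First, I rewrite the formula of Theorem~\ref{mainr} as a sum over the initial data. For $m\ge r$ we have
\[
u_m=\sum_{s=0}^{r-1}\rho(m-s,r;\mathbf{A})\,W_s,\qquad W_s=\sum_{j=s}^{r-1}A_j\,u_{s+r-1-j}.
\]
In $W_s$ I substitute $j=s+k$, so that $u_{s+r-1-j}=u_{r-1-k}$, which is precisely the entry in the $k$-th slot of $Y_0$. The condition $j\le r-1$ becomes $s+k\le r-1$, so
\[
W_s=\sum_{k=0}^{r-1}C_{s,k}\,u_{r-1-k}.
\]
Interchanging the two finite sums then gives
\[
u_m=\sum_{k=0}^{r-1}\Big(\sum_{s=0}^{r-1}\rho(m-s,r;\mathbf{A})\,C_{s,k}\Big)u_{r-1-k}.
\]
This is the combinatorial core of the argument, and it only uses commutativity implicitly through Theorem~\ref{mainr}.

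Second, I identify which index $m$ corresponds to row $i$ of $\mathbf{B}^n$, namely $m=n+i$ in the indexing used in the statement. Comparing coefficients of $u_{r-1-k}$ then yields the displayed formula for $B^{(n)}_{i,k}$. The step I expect to be the real obstacle is the range of validity. Theorem~\ref{mainr} only holds for $m\ge r$, and for $m<r$ the formula returns $0$ rather than $u_m$. This is visible already at $n=0$: all arguments $i-s$ are below $r$, so the right-hand side vanishes even though $\mathbf{B}^0=I$.

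So I will have to check carefully, against $Y_n=(u_{n+r-1},\dots,u_n)^T$, which component of $Y_n$ row $i$ really is, and whether the stated index $n+i$ should be $n+r-1-i$. I will then prove the formula in the range where the row index $m$ satisfies $m\ge r$. For the remaining small cases ($m\le r-1$), $u_m$ is itself an initial vector, so the corresponding entries of $\mathbf{B}^n$ are simply $I_{\mathcal{H}}$ or $0$ (shift entries). The statement is then to be understood either for $n\ge r$, where all rows qualify, or with this convention for the entries with $m<r$. As a sanity check I will verify the first case $m=r$: the only surviving term is $s=0$, giving $B_{i,k}=C_{0,k}=A_k$. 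This recovers the first row of $\mathbf{B}$ in \eqref{cm}, since the $k$-th entry there is $A_{r-1-k}$ paired with $u_{r-1-(r-1-k)}$, up to the same reindexing.
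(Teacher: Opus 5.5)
Your method is the paper's own: read off coefficients from $Y_n=\mathbf{B}^nY_0$ via Theorem~\ref{mainr}, substitute $j=s+k$ to get $W_s=\sum_k C_{s,k}u_{r-1-k}$, and interchange the sums. You are also right that the statement cannot hold as printed, and the paper's proof passes over this. For $n=0$ every $\rho(i-s,r;\mathbf{A})$ vanishes, so the right-hand side is $0\neq\mathbf{B}^0$. Row $i$ of $Y_n$ is $u_{n+r-1-i}$, not $u_{n+i}$; the paper also silently replaces $u_{r-1-k}$ by $u_k$. The rows with $i\ge n$ are pure shift rows, $B^{(n)}_{i,k}=\delta_{k,i-n}I_{\mathcal{H}}$.

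The genuine gap is your closing sanity check: it fails, and no reindexing rescues it. For $m=r$ (i.e.\ $n=1$, $i=0$) your formula gives $u_r=W_0=\sum_k A_k\,u_{r-1-k}$, so the coefficient of $(Y_0)_k=u_{r-1-k}$ is $A_k$. In \eqref{cm}, however, the first-row entry in column $k$ is $A_{r-1-k}$, and it multiplies $(Y_0)_k=u_{r-1-k}$, not $u_k$ as you wrote. Hence even your corrected formula $\sum_s\rho(n+r-1-i-s,r;\mathbf{A})\,C_{s,k}$ is wrong for $\mathbf{B}$. For $r=2$, $n=2$ it produces $\begin{pmatrix}A_0^2+A_1 & A_0A_1\\ A_0 & A_1\end{pmatrix}$, whereas $\mathbf{B}^2=\begin{pmatrix}A_1^2+A_0 & A_1A_0\\ A_1 & A_0\end{pmatrix}$.

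The cause is a convention clash you inherit by using Theorem~\ref{mainr} as a black box. Its formula (inherited from \eqref{eq:main} and the quoted matrix version) solves $u_{n+r}=A_0u_{n+r-1}+\cdots+A_{r-1}u_n$, with $A_0$ on the most recent term. By contrast, \eqref{bin:vect} and \eqref{cm} attach $A_0$ to the oldest term; already for $r=2$, $W_0=A_0u_1+A_1u_0\neq A_0u_0+A_1u_1=u_2$.

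To repair it, run your argument with the reversed commuting tuple $\tilde{\mathbf{A}}=(A_{r-1},\dots,A_0)$, for which \eqref{bin:vect} reads $u_{n+r}=\sum_j\tilde{A}_j u_{n+r-1-j}$. The combinatorial identity does not need $A_{r-1}\neq 0$, so this is legitimate. For $0\le i\le n-1$ you then get $B^{(n)}_{i,k}=\sum_{s=0}^{r-1}\rho(n+r-1-i-s,r;\tilde{\mathbf{A}})\,\tilde{C}_{s,k}$, with $\tilde{C}_{s,k}=A_{r-1-s-k}$ for $s+k\le r-1$ and $0$ otherwise, plus the shift rows for $i\ge n$. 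This version passes both the $m=r$ check and the $r=2$ example.
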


\begin{proof}
The general solution of the recurrence is
\[
u_n = \sum_{s=0}^{r-1} \rho(n-s, r; \mathbf{A}) \, W_s.
\]
Setting \(k=j-s\) gives \(W_s =\displaystyle \sum_{k=0}^{r-1} C_{s,k} \, u_{r-1-k}\) with
\[
C_{s,k} =
\begin{cases}
A_{s + k}, & 0 \le s + k \le r-1, \\
0, & \text{otherwise}.
\end{cases}
\]
Then, for \(i=0,\dots,r-1\),
\[
u_{n+i} = \sum_{k=0}^{r-1} \Big( \sum_{s=0}^{r-1} \rho(n+i-s, r; \mathbf{A}) \, C_{s,k} \Big) u_k,
\]
which yields the entries of $\mathbf{B}^n$ as
\[
B^{(n)}_{i,k} = \sum_{s=0}^{r-1} \rho(n+i-s, r; \mathbf{A}) \, C_{s,k}.
\]
\end{proof}

\begin{rem}
This result generalizes both versions: the matrix case \cite[Theorem 4.1]{benkhaldoun2021periodic} for constant matrix coefficients, and the scalar case given by Chen-Louck’s Theorem \cite[Theorem 3.1]{chen1996companion}.
\end{rem}

\section{The Case of Scalar Coefficients}
We now focus on the  recurrence relations given respectively by \eqref{bin:vect}  and \eqref{bin:ope} where the coefficients are scalar multiples of the identity, i.e.,
\[
A_k = a_k I_\mathcal{H}, \quad a_k \in \mathbb{R}, \quad k = 0, \dots, r-1,
\]
so that the recurrence takes the form
\begin{equation}\label{bs}
u_{n+r} = a_0 u_n + a_1 u_{n+1} + \cdots + a_{r-1} u_{n+r-1}, \quad \forall n \ge 0.
\end{equation}
\begin{equation}\label{bo}
T_{n+r} = a_0 T_n + a_1 T_{n+1} + \cdots + a_{r-1} T_{n+r-1}, \quad \forall n \geq 0,
\end{equation}
In this case, if $\lambda_1, \dots, \lambda_s $ are the distinct roots of the associated characteristic polynomial
\(
P(X) = X^r - a_{r-1} X^{r-1} - \cdots - a_0,
\)
with respective multiplicities $m_1, \dots, m_s$, we can write a general Binet-type formula for the sequence $(u_n)_{n \ge 0}$, analogous to the classical scalar case, as follows:

\begin{thm}[Explicit Binet Formula for Vector-Valued Recurrence]\label{bin:sca}
Let \( (u_n)_{n\ge 0} \subseteq \mathcal{H} \) satisfy the scalar-coefficient recurrence~\eqref{bs}. \ Then there exist unique vectors \( v_{i,j} \in \mathcal{H} \), indexed by \( 1 \le i \le s \) and \( 0 \le j \le m_i - 1 \), such that
\[
u_n = \sum_{i=1}^s \sum_{j=0}^{m_i - 1} v_{i,j} \, n^j \, \lambda_i^n,
\]
where the vectors $v_{i,j}$ are determined by the initial values $u_0, \dots, u_{r-1}$.
\end{thm}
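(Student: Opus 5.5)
The approach is to reduce to the scalar Binet formula by applying continuous linear functionals, or more concretely by working coordinate-wise. Since the coefficients are scalars, the solution space of \eqref{bs} in $\mathcal{H}^{\mathbb{Z}_+}$ is $\mathcal{S}\otimes\mathcal{H}$, where $\mathcal{S}$ is the $r$-dimensional space of complex scalar solutions of \eqref{eq:rec_scalar}. It is easiest to argue with a basis. First I will recall the classical fact, quoted in the preliminaries, that the $r$ scalar sequences $e_{i,j}(n):=n^j\lambda_i^n$ ($1\le i\le s$, $0\le j\le m_i-1$) are solutions of \eqref{eq:rec_scalar}. Indeed, $\lambda_i$ is a root of $P$ of multiplicity $m_i$, so $(X\frac{d}{dX})^jP$ vanishes at $\lambda_i$ for $j<m_i$. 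Since $\sum_i m_i=r$, these sequences form a basis of $\mathcal{S}$. Linear independence is the standard fact that the generalized Vandermonde (confluent Vandermonde) matrix $V=(e_{i,j}(n))_{0\le n\le r-1}$ is invertible.

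For existence, I will define the vectors $v_{i,j}$ by solving the $r\times r$ linear system $\sum_{i,j}e_{i,j}(n)\,v_{i,j}=u_n$ for $n=0,\dots,r-1$. Explicitly, $v_{i,j}:=\sum_{n=0}^{r-1}(V^{-1})_{(i,j),n}\,u_n\in\mathcal{H}$. This is a finite linear combination of $u_0,\dots,u_{r-1}$, so the $v_{i,j}$ are indeed determined by the initial values. Then $w_n:=\sum_{i,j}v_{i,j}n^j\lambda_i^n$ satisfies \eqref{bs}, because each $e_{i,j}$ satisfies the scalar recurrence and the coefficients $a_k$ are scalars, hence commute with the vectors. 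Moreover, $w_n=u_n$ for $0\le n\le r-1$. Since a solution of \eqref{bs} is uniquely determined by its first $r$ terms (the recurrence defines $u_{n+r}$ from previous terms), induction on $n$ gives $w_n=u_n$ for all $n$.

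For uniqueness, I will suppose that $\sum_{i,j}(v_{i,j}-v'_{i,j})n^j\lambda_i^n=0$ for all $n$, in particular for $n=0,\dots,r-1$. Pairing with an arbitrary $h\in\mathcal{H}$ gives scalars $c_{i,j}=\langle v_{i,j}-v'_{i,j},h\rangle$ with $V^{T}c=0$, so $c=0$ by invertibility of $V$. As $h$ is arbitrary, $v_{i,j}=v'_{i,j}$.

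The only nontrivial ingredient is the invertibility of the confluent Vandermonde matrix, equivalently the linear independence of the $e_{i,j}$ on $\{0,\dots,r-1\}$. I expect to cite this as classical, given the Binet representation for scalar sequences recalled in Section 2: every scalar solution has a unique such representation, and since $\dim\mathcal{S}=r$ equals the number of functions $e_{i,j}$, these functions form a basis. Restricting to $n\le r-1$ is injective on $\mathcal{S}$, so $V$ is invertible. Note that $a_0$ may be zero, in which case $0$ is a root and $n^j0^n$ is interpreted with $0^0=1$. This convention needs a brief remark, but it does not affect the argument.
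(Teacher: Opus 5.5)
Your proof is correct, modulo the same implicit hypothesis the paper makes (see below), but it takes a more constructive route than the paper.

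The paper argues weakly. For each $x\in\mathcal{H}$ it applies the scalar Binet formula to $\langle u_n,x\rangle$. It then uses uniqueness of the scalar coefficients to see that $x\mapsto\beta_{i,j}(x)$ is (conjugate-)linear, and invokes the Riesz Representation Theorem to produce $v_{i,j}$.

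You instead write down $v_{i,j}=\sum_{n=0}^{r-1}(V^{-1})_{(i,j),n}u_n$ explicitly and verify it. You use only two facts: each $n^j\lambda_i^n$ solves the scalar recurrence, and a solution of \eqref{bs} is determined by its first $r$ terms. This gains you three things:
\begin{itemize}
\item You need no appeal to Riesz.
\item You justify what the paper passes over, namely that $\beta_{i,j}$ is bounded: it is exactly $x\mapsto\sum_n (V^{-1})_{(i,j),n}\langle u_n,x\rangle$.
\item You make the clause ``determined by $u_0,\dots,u_{r-1}$'' explicit.
\end{itemize}
Your existence half also uses no Hilbert-space structure at all. Both proofs rest on the same scalar input, the invertibility of the generalized Vandermonde matrix. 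A minor slip: with your indexing of $V$ (rows $n$, columns $(i,j)$), the uniqueness relation is $Vc=0$, not $V^{T}c=0$; this is harmless.

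Your closing remark about $a_0=0$ is wrong when $0$ is a root of multiplicity $m\ge 2$. With $0^0=1$, the sequences $n^j\,0^n$ for $1\le j\le m-1$ vanish identically, so $V$ has zero columns and is singular. In that case the statement itself fails. For instance, $u_{n+3}=u_{n+2}$ has $a_2=1\neq 0$ and $P(X)=X^2(X-1)$. The Binet form $v_{1,0}\,0^n+v_{1,1}\,n\,0^n+v_{2,0}$ forces $u_1=u_2$, which the recurrence does not, and $v_{1,1}$ is not determined at all. So your argument, like the paper's (which simply quotes the scalar formula), needs the extra hypothesis $a_0\neq 0$, or at least that $0$ be at most a simple root of $P$. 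For a simple root at $0$, your convention $0^0=1$ does make everything work.
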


\begin{proof}
For every nonzero $x \in \mathcal{H}$, the scalar sequence $(\langle u_n, x \rangle)_{n\ge 0}$ satisfies the scalar recurrence relation associated with \eqref{bin:vect}. \ Then there exist unique scalars $\beta_{i,j}(x) \in \mathbb{C}$, indexed by $1 \le i \le s$ and $0 \le j \le m_i - 1$, such that
\[
\langle u_n, x \rangle = \sum_{i=1}^s \sum_{j=0}^{m_i - 1} \beta_{i,j}(x) \, n^j \, \lambda_i^n,
\]
By uniqueness, the mapping $\beta_{i,j} : \mathcal{H} \to \mathbb{C}$ is a linear form, then from the Riesz Representation Theorem, there exists a unique vector $v_{i,j} \in \mathcal{H}$ such that
$$
\beta_{i,j}(x) = \displaystyle\langle v_{i,j}, x \rangle \quad (\text{for all } x \in \mathcal{H}).
$$
In particular, for all $x \in \mathcal{H}$, we have
\[
\langle u_n, x \rangle = \displaystyle\left\langle \sum_{i=1}^s \sum_{j=0}^{m_i - 1} v_{i,j} \, n^j \, \lambda_i^n, \, x \right\rangle,
\]
which implies the following identity in $\mathcal{H}$:
\[
u_n = \displaystyle\sum_{i=1}^s \sum_{j=0}^{m_i - 1} v_{i,j} \, n^j \, \lambda_i^n.
\]
\end{proof}
As a corollary, we obtain the following theorem for operators.
\begin{thm}[Explicit Binet Formula for an operator-valued recurrence] Let \( (T_n)_{n\ge 0} \subseteq \mathbf{B}(\mathcal{H}) \)  satisfy the scalar-coefficient recurrence~\eqref{bo}. \ Then there exist unique operators \( S_{i,j} \in \mathcal{H} \), indexed by \( 1 \le i \le s \) and \( 0 \le j \le m_i - 1 \), such that
\[
T_n = \sum_{i=1}^s \sum_{j=0}^{m_i - 1} S_{i,j} \, n^j \, \lambda_i^n,
\]
where the operators \( S_{i,j} \in \mathcal{B}(\mathcal{H}) \) are uniquely determined by the initial values \( T_0, \dots, T_{r-1} \).
\end{thm}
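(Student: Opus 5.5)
The plan is to reduce to Theorem~\ref{bin:sca} by evaluating the operator sequence on vectors, and then to assemble the resulting vectors into operators.

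\textbf{Step 1: pass to vector sequences.} Fix $h\in\mathcal{H}$ and set $u_n:=T_nh$. Since the coefficients are scalars, \eqref{bo} gives \eqref{bs} for $(u_n)$. Theorem~\ref{bin:sca} then yields unique vectors $v_{i,j}(h)$ such that
\[
T_nh=\sum_{i=1}^s\sum_{j=0}^{m_i-1} v_{i,j}(h)\,n^j\lambda_i^n \quad\text{for all } n\ge 0.
\]

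\textbf{Step 2: linearity of $h\mapsto v_{i,j}(h)$.} This follows from uniqueness. For $h,h'\in\mathcal{H}$ and a scalar $c$, the sequence $T_n(h+ch')$ admits the representation with coefficients $v_{i,j}(h)+c\,v_{i,j}(h')$. Uniqueness forces $v_{i,j}(h+ch')=v_{i,j}(h)+c\,v_{i,j}(h')$. Define $S_{i,j}h:=v_{i,j}(h)$.

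\textbf{Step 3: boundedness, the main point needing care.} I would make the uniqueness in Theorem~\ref{bin:sca} quantitative. The map from coefficient tuples $(\beta_{i,j})$ to initial values $(\gamma_0,\dots,\gamma_{r-1})$, given by $\gamma_n=\sum\beta_{i,j}n^j\lambda_i^n$, is a confluent Vandermonde matrix $V\in\mathbf{M}_r(\mathbb{C})$. It is invertible because the functions $n^j\lambda_i^n$ form a basis of the $r$-dimensional solution space; this requires $\lambda_i\neq0$, which holds since $a_0\ne0$ (or one adopts the usual convention $0^0=1$ and handles a zero root separately). Consequently each $v_{i,j}(h)$ is a fixed linear combination of $u_0,\dots,u_{r-1}$:
\[
S_{i,j}=\sum_{k=0}^{r-1} c^{(i,j)}_k\,T_k,
\]
where the $c^{(i,j)}_k$ are the entries of $V^{-1}$. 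This shows directly that $S_{i,j}\in\mathbf{B}(\mathcal{H})$, and that $S_{i,j}$ is determined by $T_0,\dots,T_{r-1}$. The same formula could in fact be used from the start, bypassing Steps 1–2: apply $V^{-1}$ to the operator tuple $(T_0,\dots,T_{r-1})$ to define the $S_{i,j}$. The identity $T_n=\sum S_{i,j}n^j\lambda_i^n$ then holds for $n<r$ by construction. For all $n$ it follows by induction, since both sides satisfy \eqref{bo}: each $n^j\lambda_i^n$ solves the scalar recurrence.

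\textbf{Step 4: uniqueness.} Suppose two such families $\{S_{i,j}\}$ and $\{S'_{i,j}\}$ both represent $(T_n)$. Applying both to an arbitrary $h$ and invoking the uniqueness in Theorem~\ref{bin:sca} gives $S_{i,j}h=S'_{i,j}h$ for every $h$, so $S_{i,j}=S'_{i,j}$. Alternatively, uniqueness follows from the invertibility of $V$ applied to the first $r$ terms.
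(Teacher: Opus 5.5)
Your proposal is correct and follows essentially the paper's route: evaluate at $h$, apply Theorem~\ref{bin:sca}, get linearity from uniqueness, and get boundedness because each $S_{i,j}$ is a fixed linear combination of $T_0,\dots,T_{r-1}$. Your inversion of the matrix $(n^j\lambda_i^n)$ spells out the paper's one-line justification of that last fact.

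Your caveat that the argument needs all $\lambda_i\neq 0$, i.e.\ $a_0\neq 0$, is apt: the paper leaves this implicit, and its stated hypothesis $A_{r-1}\neq 0$ does not guarantee it. Note, however, that a zero root of multiplicity $\ge 2$ cannot be ``handled separately'' within this Binet form at all, since $n^j0^n\equiv 0$ for $j\ge 1$.
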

\begin{proof}
Fix any \( h \in \mathcal{H} \), and define the sequence of vectors \( u_n := T_n(h) \in \mathcal{H} \). \  
Then, by Theorem~\ref{bin:sca}, there exist unique vectors \( \beta_{i,j}(h) \in \mathcal{H} \) (for \( 1 \leq i \leq s \), \( 0 \leq j \leq m_i - 1 \)) such that
\(
T_n(h) = \displaystyle\sum_{i=1}^s \sum_{j=0}^{m_i - 1} \beta_{i,j}(h) \, n^j \, \lambda_i^n.
\)
For each pair \((i,j)\), define the operator \( S_{i,j} : \mathcal{H} \to \mathcal{H} \) by
\(
S_{i,j}(h) := \beta_{i,j}(h).
\)
By uniqueness, each \( S_{i,j} \) is a bounded linear operator, since it arises from a linear combination of the bounded operators \( T_0, \dots, T_{r-1} \) and the roots \(\lambda_i\).
\end{proof}
Under the assumptions of the previous theorem, if the characteristic polynomial has simple roots 
\( \{\lambda_1, \dots, \lambda_r\} \), then:

\begin{cor}
The sequence \( (T_n)_{n \geq 0} \) given by \eqref{bo} consists of the moments of a finitely atomic 
representing operator measure
\(
    E = \displaystyle\sum_{i=1}^r S_i \, \delta_{\lambda_i},
\)
where each \( S_i \in \mathbf{B}(\mathcal{H}) \) and \( \delta_{\lambda_i} \) is the Dirac measure concentrated 
at the atom \( \lambda_i \).\\ That is,
\(
    T_n = \displaystyle\sum_{i=1}^r S_i \, \lambda_i^n 
    = \int_{\mathbb{R}} t^n \, dE(t), 
    \quad \forall n \ge 0.
\)
\end{cor}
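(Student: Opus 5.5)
This corollary is the simple-root specialization of the preceding operator-valued Binet formula, so the plan is to invoke that theorem and read off the measure. When $P$ has $r$ distinct roots $\lambda_1,\dots,\lambda_r$, every multiplicity is $m_i=1$. The only index is then $j=0$, and the preceding theorem yields unique bounded operators $S_i:=S_{i,0}$ with
\[
T_n=\sum_{i=1}^r S_i\,\lambda_i^n \quad \text{for all } n\ge 0 .
\]
I will also make the $S_i$ explicit from the initial data, which confirms that they are bounded. Write $V=(\lambda_i^{k})_{0\le k\le r-1,\,1\le i\le r}$ for the Vandermonde matrix; it is invertible because the roots are distinct. The equations $T_k=\sum_i \lambda_i^k S_i$ for $k=0,\dots,r-1$ then give
\[
S_i=\sum_{k=0}^{r-1}(V^{-1})_{ik}\,T_k ,
\]
a finite linear combination of $T_0,\dots,T_{r-1}$. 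The same observation shows that the sequence $\sum_i S_i\lambda_i^n$ satisfies the recurrence \eqref{bo}, since each $\lambda_i$ is a root of $P$. It also agrees with $(T_n)$ for $n<r$, so by induction the two sequences coincide for every $n$. This gives a self-contained proof that does not depend on the Riesz argument.

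Next define the operator-valued measure $E(\Delta):=\sum_{i=1}^r \chi_\Delta(\lambda_i)\,S_i$ on the Borel sets of the plane (or of $\mathbb{R}$ when the roots are real), so that $E=\sum_i S_i\delta_{\lambda_i}$. For a polynomial $t^n$ the integral against this finitely atomic measure is by definition $\int t^n\,dE(t)=\sum_i \lambda_i^n S_i$, which equals $T_n$. Hence the $T_n$ are the moments of $E$.

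The main obstacle is interpretive rather than computational. The statement writes $\int_{\mathbb{R}}$, but the roots $\lambda_i$ may be complex, and the $S_i$ need not be self-adjoint or positive; this happens, for example, when the $T_n$ are self-adjoint but the $\lambda_i$ are not real. I will therefore state that $E$ is a complex, finitely atomic, $\mathbf{B}(\mathcal{H})$-valued measure supported on $\{\lambda_1,\dots,\lambda_r\}\subseteq\mathbb{C}$. I will add a remark covering the case where all roots are real and the $T_n$ are self-adjoint. In that case the Vandermonde matrix is real, so each $S_i$ is a real combination of self-adjoint operators and is therefore self-adjoint, and the integral is genuinely over $\mathbb{R}$. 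Positivity of the $S_i$ would require extra hypotheses, such as positivity of the block Hankel operators, and I will not claim it.
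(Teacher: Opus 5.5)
Your proposal is correct and follows the paper's route. The paper gives no separate proof: it obtains the corollary by specializing the operator-valued Binet theorem to simple roots ($m_i=1$, $S_i:=S_{i,0}$) and reading off $E=\sum_i S_i\delta_{\lambda_i}$.

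Your Vandermonde inversion $S_i=\sum_{k}(V^{-1})_{ik}T_k$ adds something useful: it makes explicit the boundedness that the paper's Binet proofs only assert, since the Riesz step there tacitly needs continuity of $\beta_{i,j}$. Your caveat is also fair: the $\int_{\mathbb{R}}$ in the statement is only literally right when all $\lambda_i$ are real.
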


\begin{rem}
    Under the previous notation, the sequence \( (T_n)_{n \geq 0} \) is an operator moment sequence 
on \( \{\lambda_1, \dots, \lambda_r\} \) (i.e., the operators \( S_i \) are positive on \( \mathcal{H} \)) 
if and only if the representing operator measure is positive. \ 
This is equivalent to the positive semidefiniteness of the local moment matrix 
$$
    \bigl( \langle T_{i+j}x, x \rangle \bigr)_{0 \leq i,j \leq r-1}\in\mathbf{M}_r(\mathbb{R}),
$$
for every $x \in \mathcal{H}$ (see \cite[Theorem~6.10]{curto2025local}).

\end{rem}

\bigskip
\subsection{Algebraic operators}
\subsubsection{Powers of algebraic operators}
Recall that a bounded linear operator \( T \in \mathcal{B}(\mathcal{H}) \) is said to be \emph{algebraic} if there exists a non-zero polynomial \( P \in \mathbb{C}[X] \) such that
\(
P(T) = 0_\mathcal{H}.
\)
Let \( T \) be an algebraic operator and let \( P \) be the associated monic minimal polynomial of degree \( r \), given by
\[
P(X) = X^r - a_{r-1} X^{r-1} - \cdots - a_1 X - a_0,
\]
with real coefficients \(a_0, \dots, a_{r-1} \in \mathbb{R}\). \ Then \(T\) satisfies the polynomial identity
\[
P(T)=0_\mathcal{H} \quad \Longleftrightarrow \quad 
T^{n+r} = a_{r-1} T^{n+r-1} + \cdots + a_1 T^{n+1} + a_0 T^n, \quad \forall n \ge 0.
\]
This identity induces an operator-valued linear recurrence relation of order \( r \) for the sequence of powers \( (T^n)_{n \ge 0} \), with operator coefficients \( A_k = a_k I_{\mathcal{H}} \) for \( k = 0, \dots, r-1 \). \ This recurrence can be directly identified with the scalar case by applying the scalar product.
\begin{cor}
Under the above assumptions, if the characteristic polynomial 
\[
P(X) = X^r - a_{r-1} X^{r-1} - \cdots - a_1 X - a_0
\]
has roots \(\lambda_i\) with multiplicities \(m_i\), then for every \(n \geq r\), the powers of \(T\) admit the two following representations:

\begin{align*}
T^n &= \sum_{s=0}^{r-1} \rho(n - s, r) \, W_s 
\quad \text{(Combinatorial representation)} \notag \\
&= \sum_{i=1}^s \sum_{j=0}^{m_i -1} C_{i,j} \, n^j \, \lambda_i^n 
\quad \text{(Binet-type representation)}
\end{align*}
where the scalar coefficients \(\rho(m, r)\) are defined as in \eqref{eq:main}, the operators \(W_s \in \mathcal{B}(\mathcal{H})\) are given by
\[
W_s := \sum_{j=s}^{r-1} a_j \, T^{s + r - 1 - j}, \quad s=0, \dots, r-1,
\]
and the operators \(C_{i,j} \in \mathcal{B}(\mathcal{H})\) depend only on the initial terms \(T^0, \dots, T^{r-1}\).
\end{cor}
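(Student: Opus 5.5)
The plan is to apply the two general results of the paper, the operator-valued combinatorial formula and the operator-valued Binet formula, to the particular sequence $T_n := T^n$. The first step is to verify the hypotheses. Since $P(T)=0_\mathcal{H}$, multiplying on the left by $T^n$ gives
\[
T^{n+r} = a_0 T^n + a_1 T^{n+1} + \cdots + a_{r-1}T^{n+r-1}\qquad (n\ge 0).
\]
This is exactly the recurrence \eqref{bo} with coefficients $A_k = a_k I_\mathcal{H}$. These coefficients are bounded and self-adjoint (because $a_k\in\mathbb{R}$), and they commute pairwise trivially.

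For the combinatorial representation, I will invoke the operator-valued analogue of Theorem~\ref{mainr}, i.e.\ the theorem for $(T_n)$ stated right after it. The point to check is that, for scalar coefficients, $\mathbf{A}^{\mathbf{k}} = a_0^{k_0}\cdots a_{r-1}^{k_{r-1}} I_\mathcal{H}$. Hence
\[
\rho(m,r;\mathbf{A}) = \rho(m,r)\, I_\mathcal{H},
\]
with the scalar $\rho(m,r)$ from \eqref{eq:main}, and the conventions \eqref{exp2} match the scalar conventions. Likewise, $W_s = \sum_{j=s}^{r-1} a_j T^{s+r-1-j}$. Substituting these gives the first line. There is a small caveat: in the general theorem the condition $A_{r-1}\neq 0$ was assumed. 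If $a_{r-1}=0$, this condition fails. However, the proof of the general theorem never uses it, since Lemma~\ref{etu} is a pointwise identity that holds for arbitrary scalar coefficients. Alternatively, one can verify the scalar-coefficient case directly by a routine induction on $n$ using $\rho(m,r) = \sum_{j} a_j \rho(m-1-j,r)$. I will note this rather than redo it.

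For the Binet-type representation, I will apply the operator Binet theorem (the corollary of Theorem~\ref{bin:sca}) to $T_n = T^n$. This directly yields operators $C_{i,j}=S_{i,j}$ with
\[
T^n = \sum_i \sum_{j<m_i} C_{i,j}\, n^j \lambda_i^n
\]
for all $n\ge 0$, and hence for $n\ge r$. To justify that the $C_{i,j}$ depend only on $T^0,\dots,T^{r-1}$ and are bounded, I will make them explicit. Solve the confluent Vandermonde system obtained from $n=0,\dots,r-1$; it is invertible because the $\lambda_i$ are distinct. This expresses each $C_{i,j}$ as a fixed complex linear combination of $I_\mathcal{H}, T,\dots,T^{r-1}$. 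Uniqueness then follows from the invertibility of the same system.

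The main obstacle is the boundedness and well-definedness of the coefficient operators in the Binet part. The corresponding proof earlier in the paper is somewhat informal here, so I will rely on the explicit confluent Vandermonde inversion to make the argument airtight. The remaining steps are specializations of results already established.
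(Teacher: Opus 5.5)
\textbf{A false identity is inherited in the combinatorial half.} Your route is the one the paper intends: it gives no separate argument, only the specialization $T_n=T^n$, $A_k=a_kI_\mathcal{H}$ of the operator combinatorial and Binet theorems. However, the combinatorial half cannot be completed as stated, because the indexing of $\rho$ and $W_s$ is reversed relative to the recurrence that $P(T)=0$ produces.

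From $P(T)=0$ you get $T^n=a_{r-1}T^{n-1}+\cdots+a_0T^{n-r}$, so $a_j$ sits at lag $r-j$. But $\rho(m,r)$ gives $a_j$ weight $j+1$, and the recursion you quote, $\rho(m,r)=\sum_j a_j\rho(m-1-j,r)$, puts $a_j$ at lag $j+1$. In fact $\sum_{s}\rho(n-s,r)W_s$ is the $n$-th term of the solution of $x_n=a_0x_{n-1}+\cdots+a_{r-1}x_{n-r}$ (characteristic polynomial $X^r-a_0X^{r-1}-\cdots-a_{r-1}$) with $x_m=T^m$ for $m<r$. That is the convention of \eqref{eq:Yn}, where the matrix formula is correct. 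By contrast, \eqref{eq:rec_scalar}, \eqref{bin:ope} and \eqref{bo} reverse it, so the general theorem you specialize has the same defect.

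Concretely, at $n=r$ the formula reads $T^r=W_0=a_0T^{r-1}+a_1T^{r-2}+\cdots+a_{r-1}I_\mathcal{H}$, while $P(T)=0$ gives $T^r=a_{r-1}T^{r-1}+\cdots+a_0I_\mathcal{H}$. Since $I_\mathcal{H},T,\dots,T^{r-1}$ are linearly independent ($P$ is minimal), these agree only when $a_j=a_{r-1-j}$ for all $j$. For example, take a nontrivial orthogonal projection $T$: then $P(X)=X^2-X$, so $a_1=1$, $a_0=0$ and $W_0=I_\mathcal{H}$. The formula gives $T^2=I_\mathcal{H}$; for $n=2,3,4,\dots$ it outputs $I_\mathcal{H},T,I_\mathcal{H},T,\dots$. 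So ``substituting gives the first line'' transports a false identity, and the induction you defer breaks at its base case $n=r$.

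The repair is to reverse the coefficients. Use $\rho(m,r)=\sum_{\langle\mathbf{k},\mathbf{d}\rangle=m-r}\binom{|\mathbf{k}|}{\mathbf{k}}a_{r-1}^{k_0}a_{r-2}^{k_1}\cdots a_0^{k_{r-1}}$ and $W_s=\sum_{i=0}^{r-1-s}a_iT^{s+i}$. With these, your direct induction does go through.

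\textbf{A smaller gap in the Binet half.} In the basis $n^j\lambda_i^n$, the confluent Vandermonde system on $n=0,\dots,r-1$ is \emph{not} invertible when $0$ is a root of multiplicity at least $2$: the columns $n^j0^n$ with $j\ge1$ vanish identically. For instance, if $T\neq0$ and $T^2=0$ (so $P(X)=X^2$), the equation at $n=1$ would force $T=0$.

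The assertion for $n\ge r$ still holds, but your argument needs adjusting. Write $P(X)=X^{m_0}Q(X)$ with $Q(0)\neq0$, and run the Vandermonde argument on the shifted sequence $T^{n+m_0}$, whose recurrence has characteristic polynomial $Q$. Then re-expand $(n-m_0)^j\lambda_i^{n-m_0}$ in powers of $n$. Alternatively, simply assume $a_0\neq0$.
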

\subsubsection{Exponentials of algebraic operators}

In order to compute the exponential of algebraic operators, we need the Binet expression and some tools from the theory of Bell polynomials. \ Recall that the \emph{Bell number} $B_n$ counts the number of partitions of a set with $n$ elements. \ Dobinski's formula provides an explicit representation of the Bell numbers:
\[
B_n = \frac{1}{e} \sum_{k=0}^{\infty} \frac{k^n}{k!}.
\] 
The \emph{Stirling number of the second kind} with parameters $j$ and $k$, denoted
\(
S(j,k),
\)
enumerates the number of partitions of a set with $j$ elements into $k$ disjoint, nonempty subsets. \ In particular, the Bell numbers can be expressed as
\[
B_n = \sum_{k=0}^n S(n,k).
\]
The numbers $S(n,k)$ are also called \emph{Stirling partition numbers}. \ More generally, the $n$-th \emph{Bell polynomial} is defined by
\begin{equation}\label{bp}
B_n(x) = \sum_{k=0}^n S(n,k) \, x^k.
\end{equation}
These numbers and polynomials have many remarkable properties and appear in several 
combinatorial identities. \ A comprehensive reference is \cite{comtet2012}. \ More recently, the authors in \cite{mezo2011} generalized the Bell numbers and polynomials to the so-called $r$-Bell numbers and polynomials, as follows:

\begin{thm}[Dobinski's formula for $r$-Bell numbers {\cite[Theorem 5.1]{mezo2011}}]\label{bn}
The $r$-Bell polynomials satisfy the identity
\[
B_{n,r}(x) = \frac{1}{e^x} \sum_{k=0}^{\infty} \frac{(k+r)^n}{k!}\, x^k.
\]
Consequently, the $r$-Bell numbers are given by
\[
B_{n,r} = \frac{1}{e} \sum_{k=0}^{\infty} \frac{(k+r)^n}{k!}.
\]
\end{thm}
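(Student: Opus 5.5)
The plan is to reduce the identity to a polynomial identity in a single variable and then sum against the exponential series. Since the paper quotes the statement from \cite{mezo2011} without restating the definition, I will use the standard one:
\[
B_{n,r}(x)=\sum_{j=0}^{n} {n+r \brace j+r}_r\, x^j ,
\]
where the $r$-Stirling number ${n+r \brace j+r}_r$ counts the partitions of $\{1,\dots,n+r\}$ into $j+r$ nonempty blocks such that $1,\dots,r$ lie in distinct blocks. For $r=0$ this reduces to \eqref{bp}, and the numerical statement follows from the polynomial one by setting $x=1$.

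The key step is the expansion
\[
(m+r)^n=\sum_{j=0}^{n} {n+r \brace j+r}_r\, m^{\underline{j}}, \qquad m\in\mathbb{Z}_+,
\]
where $m^{\underline{j}}=m(m-1)\cdots(m-j+1)$ is the falling factorial. I will prove it by double counting. The left side counts maps $f$ from $\{r+1,\dots,r+n\}$ to a target set $\{1,\dots,r\}\cup\{c_1,\dots,c_m\}$ consisting of $r$ special points and $m$ colours.

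To each such $f$ I associate a partition of $\{1,\dots,n+r\}$ as follows. For each $i\le r$, the set $\{i\}\cup f^{-1}(i)$ forms one block. The nonempty fibres $f^{-1}(c_\ell)$ form the remaining blocks, and there are some number $j$ of these. This gives a partition counted by ${n+r \brace j+r}_r$, together with an injective assignment of colours to its $j$ non-special blocks. Conversely, each such pair (partition, injective colouring) determines a unique $f$. Since there are $m^{\underline{j}}$ injective colourings, summing over $j$ gives the identity. The only care needed is at this bookkeeping step, namely checking that the correspondence is a genuine bijection, including the cases $j=0$ and $m<j$ where $m^{\underline j}=0$.

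Next I substitute $m=k$, multiply by $x^k/k!$, and sum over $k\ge 0$. The series converges absolutely for every $x$, since $(k+r)^n|x|^k/k!$ is summable, so the finite sum over $j$ may be interchanged with the sum over $k$. Using $k^{\underline j}/k!=1/(k-j)!$ for $k\ge j$, and $k^{\underline j}=0$ for $k<j$, I get
\[
\sum_{k\ge0}\frac{(k+r)^n}{k!}x^k=\sum_{j=0}^n {n+r \brace j+r}_r\, x^j\sum_{k\ge j}\frac{x^{k-j}}{(k-j)!}=e^x B_{n,r}(x).
\]
Dividing by $e^x$ gives the polynomial identity, and specialising to $x=1$ gives the formula for $B_{n,r}$.
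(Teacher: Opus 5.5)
Your proof is correct, but there is no argument in the paper to compare it with: the statement is quoted from \cite{mezo2011} without proof. Only its $r=0$ case is used later, in the proof of the formula for $e^T$, in the form $\sum_{n\ge 0} n^j\lambda^n/n! = e^{\lambda}P_j(\lambda)$. So your proposal has to stand on its own, and it does.

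Write $S_r(n+r,j+r)$ for the $r$-Stirling number. Your correspondence between maps $\{r+1,\dots,r+n\}\to\{1,\dots,r\}\cup\{c_1,\dots,c_m\}$ and pairs (partition of $\{1,\dots,n+r\}$ with $1,\dots,r$ in distinct blocks, injective colouring of the $j$ non-special blocks) is a genuine bijection:
\begin{itemize}
\item the special blocks are recovered as the $r$ blocks containing $1,\dots,r$, so the inverse is well defined;
\item the non-special blocks are nonempty disjoint subsets of an $n$-set, so $j\le n$;
\item the cases $j=0$, $n=0$ and $m<j$ need no separate treatment.
\end{itemize}
This gives the falling-factorial expansion $(m+r)^n=\sum_{j=0}^n S_r(n+r,j+r)\,m^{\underline{j}}$. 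The interchange that follows only exchanges a finite sum over $j$ with convergent series, so nothing more is needed there.

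A common alternative is to first establish $\sum_{n\ge0}B_{n,r}(x)z^n/n!=\exp\bigl(rz+x(e^z-1)\bigr)$ and then expand $e^{xe^z}=\sum_k x^k e^{kz}/k!$, comparing coefficients. That route gives the exponential generating function as a by-product. However, it requires proving that generating function beforehand, from a recurrence or the exponential formula. Your argument goes straight from the combinatorial definition.

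One point is worth stating explicitly: nothing in your proof uses that $x$ is real, so it gives the identity for every $x\in\mathbb{C}$. The paper needs exactly this, because the roots $\lambda_i$ of $P$ at which $P_j$ is evaluated in the formula for $e^T$ need not be real.
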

\begin{prop}[Exponential of $T$]
Under the above assumptions, the exponential of $T$ can be expressed explicitly as
\[
e^T = \sum_{i=1}^s \sum_{j=0}^{m_i-1} C_{i,j} \, e^{\lambda_i} P_j(\lambda_i),
\]
where $P_j(\lambda)$ is the \emph{Bell polynomial}, given explicitly by \eqref{bp}.
\end{prop}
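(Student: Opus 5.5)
The plan is to substitute the Binet-type representation from the preceding corollary into the power series $e^T=\sum_{n\ge0}T^n/n!$ and evaluate the resulting scalar series with Dobinski's formula. The Binet representation is stated for $n\ge r$. Since the recurrence $T^{n+r}=a_{r-1}T^{n+r-1}+\cdots+a_0T^n$ holds for all $n\ge0$ and the operators $C_{i,j}$ are fitted to the initial terms $T^0,\dots,T^{r-1}$, it in fact holds for every $n\ge0$. This is the content of Theorem~\ref{bin:sca} applied to $u_n=T^nh$, which gives the formula for all $n$. We use the convention $0^0=1$ for the term $j=0$, $n=0$. Hence
\[
e^T=\sum_{n=0}^\infty\frac{1}{n!}\sum_{i=1}^s\sum_{j=0}^{m_i-1}C_{i,j}\,n^j\lambda_i^n .
\]

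Next we interchange the finite sums over $(i,j)$ with the series over $n$. This is legitimate because, for each fixed $(i,j)$, the scalar series $\sum_n n^j\lambda_i^n/n!$ converges absolutely, and $C_{i,j}$ is a fixed bounded operator. So the series $\sum_n C_{i,j}n^j\lambda_i^n/n!$ converges in operator norm, and a finite sum of norm-convergent series may be regrouped. This yields
\[
e^T=\sum_{i=1}^s\sum_{j=0}^{m_i-1}C_{i,j}\sum_{n=0}^\infty\frac{n^j\lambda_i^n}{n!}.
\]

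The remaining step is the scalar identity $\sum_{n\ge0}\frac{n^j x^n}{n!}=e^{x}B_j(x)$. It follows from Theorem~\ref{bn} with $r=0$ (renaming $n\leftrightarrow j$ and $k\leftrightarrow n$): the $0$-Bell polynomial is $B_{j,0}(x)=e^{-x}\sum_n n^jx^n/n!$. Alternatively one can verify it directly from \eqref{bp}. Write $n^j=\sum_{k}S(j,k)\,n(n-1)\cdots(n-k+1)$, which is the standard expansion of powers in falling factorials. Then
\[
\sum_n \frac{n(n-1)\cdots(n-k+1)\,x^n}{n!}=x^k e^x ,
\]
and summing over $k$ gives $e^x\sum_k S(j,k)x^k=e^xB_j(x)$. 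Setting $P_j=B_j$ and $x=\lambda_i$ gives the claimed formula.

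The main obstacle is bookkeeping rather than analysis. First, we must make sure the Binet expansion is valid for all $n\ge0$ and not only for $n\ge r$. If it were valid only for $n\ge r$, we would add the finitely many correction terms $\sum_{n<r}\bigl(T^n-\sum C_{i,j}n^j\lambda_i^n\bigr)/n!$, but uniqueness in Theorem~\ref{bin:sca} shows these corrections vanish. Second, the identification of the $0$-Bell polynomial with the Bell polynomial of \eqref{bp} should be checked via the falling-factorial argument above, so as not to rely on conventions in \cite{mezo2011}.
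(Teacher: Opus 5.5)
Your proposal is correct and follows the same route as the paper: substitute the Binet-type representation into $e^T=\sum_{n\ge0}T^n/n!$, pull the finitely many operators $C_{i,j}$ out of the series, and evaluate $\sum_{n\ge0} n^j\lambda^n/n! = e^{\lambda}B_j(\lambda)$ via Dobinski's formula with $r=0$. The points you add are details the paper's proof passes over silently: that the Binet expansion must hold for all $n\ge 0$ and not only for $n\ge r$ (this matters when $0$ is a root, e.g.\ for a projection, where the terms with $n\ge r$ do not determine the coefficient of $0^n$), the norm-convergence justification for the interchange, and the direct falling-factorial check of the scalar identity.
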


\begin{proof}
By definition, the exponential of $T$ is given by the series
\[
e^T := \sum_{n=0}^{\infty} \frac{T^n}{n!}.
\]
Substituting the Binet-type formula for $T^n$ yields
\[
e^T 
= \sum_{i=1}^s \sum_{j=0}^{m_i-1} C_{i,j} \sum_{n=0}^{\infty} \frac{n^j \lambda_i^n}{n!}.
\]
Thus, it remains to compute the scalar series
\(
\displaystyle\sum_{n=0}^{\infty} \frac{n^j \lambda^n}{n!}.
\)
This is a special case of Theorem~\ref{bn} with $r=0$: for each $j \geq 0$,
\[
\sum_{n=0}^{\infty} \frac{n^j \lambda^n}{n!} = e^\lambda P_j(\lambda).
\]
Hence, we obtain the explicit formula
\[
e^T = \sum_{i=1}^s \sum_{j=0}^{m_i-1} C_{i,j} \, e^{\lambda_i} P_j(\lambda_i),
\]
which completes the proof.
\end{proof}

\begin{rem}
These results illustrate the interplay between combinatorial techniques and operator theory, providing effective tools for analyzing operator sequences in Hilbert spaces. 
\end{rem}

\section{Continuous-Time Operator Recurrence}

We consider next an application to the  \emph{continuous-time analogue} of the discrete operator recurrence studied in this paper. Such systems are classical in the study of \emph{linear evolution equations in Hilbert spaces}. \ More precisely $u: \mathbb{R} \longrightarrow \mathcal{H}$, satisfying a \emph{linear operator differential equation of order $r$}:

\begin{equation}\label{cont-recurrence}
\frac{d^r}{dt^r} u(t) = A_0 u(t) + A_1 \frac{d}{dt} u(t) + \dots + A_{r-1} \frac{d^{r-1}}{dt^{r-1}} u(t),
\end{equation}
where $A_0, \dots, A_{r-1} \in \mathcal{B}(\mathcal{H})$ are bounded operator coefficients. \

As in the discrete case, we introduce the \emph{state vector} in decreasing order of derivatives:
\[
Y(t) :=
\begin{pmatrix}
\frac{d^{r-1}}{dt^{r-1}} u(t) \\[1mm]
\frac{d^{r-2}}{dt^{r-2}} u(t) \\[1mm]
\vdots \\[1mm]
u(t)
\end{pmatrix} \in \mathcal{H}^{(r)},
\]
and rewrite \eqref{cont-recurrence} as a \emph{first-order operator differential system}:
\begin{equation}\label{cont-system}
\frac{d}{dt} Y(t) = \mathbf{B} \, Y(t), \quad Y(0) = Y_0 \in \mathcal{H}^{(r)},
\end{equation}
where $\mathbf{B} \in \mathcal{B}(\mathcal{H}^{(r)})$ is the \emph{operator-valued companion matrix} given by \eqref{cm}.
Formally, the solution is given by the exponential of the operator matrix:
\[
Y(t) = e^{t \mathbf{B}} \, Y_0.
\]
In other words:
\(
u(t) = 
\begin{bmatrix} 
0 & 0 & \cdots & I_{\mathcal{H}} 
\end{bmatrix} 
\, e^{t \mathbf{B}} \, Y_0,
\)
where 
\(
\begin{bmatrix} 0 & \cdots & I_{\mathcal{H}} \end{bmatrix}
\)
is the projection from \(\mathcal{H}^{(r)}\) onto the last component. \ From this formulation naturally raises the following  question.

\begin{quest}
How can we extend the combinatorial techniques and explicit formulas for powers of the companion matrix, developed in the discrete-time commuting case, to the continuous-time operator setting? In particular, can we obtain a closed-form expression for $e^{t \mathbf{B}}$\,?
\end{quest}

\section{Declarations}
\subsection{Funding} 
The first-named author was partially supported by NSF Grant DMS-2247167. \ The last-named author was partially supported by the Arab Fund Foundation Fellowship Program, The Distinguished Scholar Award-File 1026.

\subsection{Conflicts of interest/competing interests} 

{\bf Non-financial interests}: \ None.

\subsection{Data availability.}
All data generated or analyzed during this study are included in this article.

\end{document}